 \numberwithin{equation}{section} 
\def\C{\mathbb{ C}}
\newtheorem{thm}{Theorem}[section]
\newtheorem{lem}[thm]{Lemma}
\newtheorem{cor}[thm]{Corollary}
\newcommand{\be}{\begin{equation}}
\newcommand{\ee}{\end{equation}}
\newcommand{\ba}{\begin{array}}
\newcommand{\ea}{\end{array}}
\newcommand{\bg}{\begin{gathered}}
\newcommand{\eg}{\end{gathered}}
\newcommand{\al}{\alpha}
\newcommand{\f}{\phi}
\newcommand{\bea}{\begin{eqnarray}}
\newcommand{\eea}{\end{eqnarray}}
\newcommand{\Sum}{\sum_{n=0}^\infty}
\begin{document}

\title{Integrals and Series Representations of $q$-Polynomials and 
Functions: Part II\\
 Schur Polynomials  and the 
Rogers-Ramanujan Identities}

\author{ Mourad E. H.  Ismail 
 \thanks{Research partially supported  by the DSFP of King Saud 
 University and by  the National Plan for Science, Technology and 
 innovation (MAARIFAH), King Abdelaziz City for Science and 
 Technology, Kingdom of Saudi Arabia, Award number 
14-MAT623-02.} \\
 \and Ruiming Zhang \thanks{Corresponding author, research partially supported by National 
 Science Foundation of China, grant No. 11371294.}}
\maketitle

\begin{abstract}
 We  give several expansion and identities involving the Ramanujan 
 function $A_q$  and the Stieltjes--Wigert polynomials. Special values 
 of our idenitities give $m$-versions of some of the items on the 
 Slater list of Rogers-Ramanujan type identities.  We also study some 
 bilateral extensions of certain transformations in the theory of 
 basic hypergeometric functions. 
 \end{abstract}
 {\bf Mathematics Subject Classification MSC 2010}: Primary 11P84, 33D45 Secondary  05A17. 
 \noindent
 \\
 
 {\bf Keywords and phrases}: Rogers-Ramanujan identities, 
 $m$-versions, the Ramanujan function, Stieltjes--Wigert polynomials, 
 bilateral $q$-series.

{\bf Filename}:  IsmZhaRRII1

\section{Introduction}  
In part I of this series of papers we derived several integral 
representations for many $q$-functions and polynomials including the 
the $q$-exponential functions $e_q, E_q$, and our ${\mathcal E}_q$, 
\cite{Ism:Zha}. We also derived several series identities and 
transformations. The present work is part II where we continue our 
studies and establish quite a few series  identities and 
transformation formulas. Some of our formulas give new identities for 
the Schur polynomials introduced by I. Schur in \cite{Sch}.  We also 
treat a generalization of the Schur polynomials we introduced in 
\cite{Ism:Zha3}.

We recall the  Rogers--Ramanujan identities  are 
\bea
\bg
\Sum \frac{q^{n^2}}{(q;q)_n} = \frac{1}{(q, q^4;q^5)_\infty}, \qquad 
\Sum \frac{q^{n^2+n}}{(q;q)_n} = \frac{1}{(q^2, q^3;q^5)_\infty},
 \eg
 \label{eqRR}
 \eea
 where the notation for the $q$-shifted factorials is the standard 
 notation followed in \cite{Gas:Rah},  \cite{And:Ask:Roy}, or 
 \cite{Ismbook}.  References for the Rogers-Ramanujan identities, 
 their origins and many of 
 their applications are in \cite{And2}, \cite{And}, and \cite{And:Ask:Roy}.   
 Garrett, Ismail, and Stanton 
 \cite{Gar:Ism:Sta}  generalized the Rogers--Ramanujan to   
  \bea
 \Sum \frac{q^{n^2+mn}}{(q;q)_n} =\frac{(-1)^m q^{-\binom{m}{2}} a_m(q)}{(q,q^4;q^5)_\infty}
- \frac{(-1)^{m} q^{-\binom{m}{2}} b_m(q)}{(q^2,q^3;q^5)_\infty},
\label{eqmform}
 \eea
 where $a_m(q)$ and $b_m(q)$ are defined by 
 \begin{equation}
\label{eq13.6.1}
\begin{gathered}
a_m(q)=\sum_{j\ge 0} q^{j^2+j}\bmatrix m-j-2 \\ j\endbmatrix_q, 
\qquad 
b_m(q) =\sum_{j\ge 0} q^{j^2}\bmatrix m-j-1 \\ j\endbmatrix_q.
\end{gathered}
\end{equation}
The Garrett--Ismail--Stanton result became known as 
the $m$-version of the Rogers-Ramanujan identities. 

The polynomials $a_m(q)$ and $b_m(q)$ were considered by Schur in conjunction with his proof of the  
Rogers--Ramanujan identities, see
\cite{And2}  and \cite{Gar:Ism:Sta}   for details. We shall refer to 
$a_m(q)$ and $b_m(q)$ as the Schur polynomials. The closed form 
expressions for $a_m$ and $b_m$ in  \eqref{eq13.6.1}  were given 
by  Andrews in \cite{And4}, where he also  
gave a polynomial generalization of the Rogers--Ramanujan 
identities.  We must note that $a_{m+1}(q)$ and $b_{m+1}(q)$ are the partial numberators and denominators of the Ramanujan continued fraction, 
   
  In this work we will use the following  confluent limit of the $q$-Gauus 
  sum
  \begin{eqnarray}
  \label{eqqCGauss}
{}_{1}\phi_{1}\left(a;c;q,c/a\right) & = & \frac{(c/a;q)_{\infty}}
{(c;q)_{\infty}}
\end{eqnarray}
  We will also use the integral representations,  
  \bea
  \bg
\left(bq^{\alpha};q\right)_{\infty}q^{\alpha^{2}/2}
{}_{1}\phi_{1}\left(a;bq^{\alpha};q,zq^{\alpha+1/2}\right) \\
=\frac{1}{\sqrt{\pi\log q^{-2}}}\int_{-\infty}^{\infty}
\frac{\left(-aze^{ix};q\right)_{\infty}\exp\left(\frac{x^{2}}
{\log q^{2}}+i\alpha x\right)dx}
{\left(-bq^{-1/2}e^{ix},-ze^{ix};q\right)_{\infty}},
\eg
\label{eq:1}
\eea 
\begin{equation}
q^{\alpha^{2}/2}A_{q}\left(q^{\alpha}z\right)=\frac{1}{\sqrt{2\pi}}
\int_{-\infty}^{\infty}\frac{\left(zq^{1/2}e^{ix};q\right)_{\infty}
\exp\left(\frac{x^{2}}{\log q^{2}}+i\alpha x\right)}{\sqrt{\log q^{-1}}}dx,
\label{eq:2}
\end{equation}
\begin{equation}
q^{\alpha^{2}/2}\left(-zq^{\alpha+1/2};q\right)_{\infty}=\int_{-\infty}^{\infty}\frac{\exp\left(x^{2}/\log q^{2}+i\alpha x\right)dx}{\left(ze^{ix};q\right)_{\infty}\sqrt{\pi\log q^{-2}}},\label{eq:qe1}
\end{equation}
and
\begin{equation}
q^{\alpha^{2}/2}S_{n}\left(xq^{\alpha-1/2};q\right)=\int_{-\infty}^{\infty}\frac{\left(xe^{iy};q\right)_{n}}{\left(q;q\right)_{n}}\frac{\exp\left(y^{2}/\log q^{2}+i\alpha y\right)dy}{\sqrt{\pi\log q^{-2}}}.
\label{eq:sw1}
\end{equation}
  which we proved in our forthcoming paper \cite{Ism:Zha4}.  
  
  In Section 2  we prove  the following theorem and discuss some of its implications. 
  \begin{thm}\label{thm1}
  We have the indentites
  \bea
 (-q;q)_{\infty} & =&\frac{1}{(q,q^{4};q^{5})_{\infty}}
 \sum_{n=0}^{\infty}\frac{q^{2n}a_{2n}(q)}{(q^{2};q^{2})_{n}}
 -\frac{1}{(q^{2},q^{3};q^{5})_{\infty}}\sum_{n=0}^{\infty}
 \frac{q^{2n}b_{2n}(q)}{(q^{2};q^{2})_{n}}. 
\label{eq:6}\\
(-q^{2};q)_{\infty}&=&\frac{1}{(q^{2},q^{3};q^{5})_{\infty}}
\sum_{n=0}^{\infty}\frac{q^{2n}b_{2n+1}(q)}{(q^{2};q^{2})_{n}}
-\frac{1}{(q,q^{4};q^{5})_{\infty}}\sum_{n=0}^{\infty}
\frac{q^{2n}a_{2n+1}(q)}{(q^{2};q^{2})_{n}}.\label{eq1.8}
\eea
  \end{thm}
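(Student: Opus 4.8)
The plan is to recognize each right-hand side as a single series in the Ramanujan function and then to evaluate that series with the integral representations \eqref{eq:2} and \eqref{eq:qe1}. First I would recast the $m$-version. Since $A_q(w)=\sum_{k\ge 0}q^{k^{2}}(-w)^{k}/(q;q)_k$, the left side of \eqref{eqmform} is exactly $A_q(-q^{m})$, so \eqref{eqmform} reads
\[
A_q(-q^{m})=\frac{(-1)^{m}q^{-\binom m2}a_m(q)}{(q,q^{4};q^{5})_{\infty}}
-\frac{(-1)^{m}q^{-\binom m2}b_m(q)}{(q^{2},q^{3};q^{5})_{\infty}}.
\]
Taking $m=2n$ (so $(-1)^m=1$ and $\binom{2n}{2}=2n^{2}-n$), multiplying by $q^{2n^{2}+n}/(q^{2};q^{2})_n$, and summing on $n$, the powers combine as $q^{2n^{2}+n}q^{-\binom{2n}{2}}=q^{2n}$, and the two resulting series are precisely those on the right of \eqref{eq:6}. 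Hence \eqref{eq:6} is equivalent to
\[
S:=\sum_{n=0}^{\infty}\frac{q^{2n^{2}+n}}{(q^{2};q^{2})_n}A_q(-q^{2n})=(-q;q)_{\infty}.
\]
In the same way, using $m=2n+1$ (so $(-1)^m=-1$ and $\binom{2n+1}{2}=2n^{2}+n$) with the weight $q^{2n^{2}+3n}/(q^{2};q^{2})_n$, identity \eqref{eq1.8} becomes $S':=\sum_{n\ge 0}q^{2n^{2}+3n}A_q(-q^{2n+1})/(q^{2};q^{2})_n=(-q^{2};q)_{\infty}$.

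Next I would evaluate $S$ through \eqref{eq:2}. Writing $-q^{2n}=q^{2n+\beta}\cdot(-q^{-\beta})$ with an auxiliary parameter $\beta<\tfrac12$, I apply \eqref{eq:2} with $\alpha=2n+\beta$ and $z=-q^{-\beta}$. The Gaussian prefactor yields $q^{-(2n+\beta)^{2}/2}$, which combines with $q^{2n^{2}+n}$ to leave $q^{\,n(1-2\beta)-\beta^{2}/2}$, while the exponential supplies $e^{2inx}$. After interchanging sum and integral, the $n$-sum is a $q$-exponential,
\[
\sum_{n\ge 0}\frac{(q^{1-2\beta}e^{2ix})^{n}}{(q^{2};q^{2})_n}=\frac{1}{(q^{1-2\beta}e^{2ix};q^{2})_{\infty}}.
\]
Setting $c=q^{1/2-\beta}$ (so $zq^{1/2}=-c$ and $q^{1-2\beta}=c^{2}$) and using the factorizations $(-ce^{ix};q)_{\infty}=(-ce^{ix};q^{2})_{\infty}(-cqe^{ix};q^{2})_{\infty}$ together with $(c^{2}e^{2ix};q^{2})_{\infty}=(ce^{ix};q)_{\infty}(-ce^{ix};q)_{\infty}$, the integrand collapses to $1/(ce^{ix};q)_{\infty}$. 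What remains is exactly the right side of \eqref{eq:qe1} with $\alpha=\beta$ and $\tilde z=q^{1/2-\beta}$, where $-\tilde z\,q^{\alpha+1/2}=-q$; the parameter $\beta$ cancels against the prefactor $q^{-\beta^{2}/2}$ and $S=(-q;q)_{\infty}$. The computation for $S'$ is identical with $c=q^{3/2-\beta}$, which gives $-\tilde z\,q^{\alpha+1/2}=-q^{2}$ and $S'=(-q^{2};q)_{\infty}$.

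The genuinely routine parts are the bookkeeping of the powers of $q$ and the two Pochhammer factorizations. The steps deserving care are the interchange of summation and integration, justified by the Gaussian decay of $\exp(x^{2}/\log q^{2})$ against the super-exponential decay of the weights $q^{2n^{2}+n}/(q^{2};q^{2})_n$, and the convergence of the inner $q$-exponential, which needs $|q^{1-2\beta}e^{2ix}|<1$, i.e. the harmless restriction $\beta<\tfrac12$. I expect the main obstacle to be exactly this matching: one must choose the splitting of the argument so that, after the factor $q^{\alpha^{2}/2}=q^{(2n+\beta)^{2}/2}$ coming from \eqref{eq:2} is absorbed into the weight, all $n$-dependence outside $e^{2inx}$ reduces to the single linear power $q^{\,n(1-2\beta)}$ that makes the series summable in closed form.
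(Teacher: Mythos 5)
Your proposal is correct, and the arithmetic all checks out: the reduction of \eqref{eq:6} and \eqref{eq1.8} via \eqref{eqmform} to the two sums $S=\sum_{n}q^{2n^{2}+n}A_q(-q^{2n})/(q^{2};q^{2})_n=(-q;q)_\infty$ and $S'=\sum_{n}q^{2n^{2}+3n}A_q(-q^{2n+1})/(q^{2};q^{2})_n=(-q^{2};q)_\infty$ is exactly the exponent bookkeeping the paper performs (in the opposite direction), and your evaluation of $S$ and $S'$ is sound, including the factorization $(c^{2}e^{2ix};q^{2})_\infty=(ce^{ix};q)_\infty(-ce^{ix};q)_\infty$ that collapses the integrand and the final appeal to \eqref{eq:qe1}. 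Where you diverge from the paper is in how $S$ and $S'$ are established: the paper simply sets $z=q$ and $z=q^{2}$ in Corollary \ref{cor1} (the $a=1$ case of \eqref{eq1.9}, which is imported as (7.32) of \cite{Ism:Zha3}), whereas you re-derive these two special cases from scratch using the Fourier--Gauss representations \eqref{eq:2} and \eqref{eq:qe1}. Your route is longer but makes the theorem self-contained modulo the integral representations of \cite{Ism:Zha4}, and it exposes why the quadratic exponents $2n^{2}+n$ and $2n^{2}+3n$ are forced (they must cancel the Gaussian prefactor $q^{\alpha^{2}/2}$ with $\alpha=2n+\beta$); the paper's route is shorter and places the burden on the previously established expansion \eqref{eq1.12}. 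Two cosmetic remarks: the auxiliary parameter $\beta$ is unnecessary (taking $\beta=0$ already satisfies $\beta<\tfrac12$ and simplifies the cancellation), and the first Pochhammer factorization you quote, $(-ce^{ix};q)_{\infty}=(-ce^{ix};q^{2})_{\infty}(-cqe^{ix};q^{2})_{\infty}$, is never actually used.
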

  This theorem will follow from Theorem \ref{thm2} which we now state. 
  \begin{thm}\label{thm2} The following identities hold for all $a, z
   \in \C$ but $z \ne q^n, n=0, -1, -2, \cdots$,
  \bea
  \left(z;q\right)_{\infty}{}_{1}\phi_{1}\left(a;z;q,-z\right)  &=&\sum_{n=0}^{\infty}\frac{q^{2n^{2}-n}z^{2n}}{(q^{2};q^{2})_{n}}A_{q}\left(q^{2n-1}az\right),
\label{eq1.9}\\
\left(b;q\right)_{\infty}{}_{1}\phi_{1}\left(a;b;q,-b\right)&=&
\sum_{n=0}^{\infty}\frac{q^{n^{2}-n}\left(-ab\right)^{n}}
{\left(q;q\right)_{n}}A_{q^{2}}\left(-q^{2n-1}b^{2}\right),
\label{eq:exponential 18}\\
\sum_{n=0}^{\infty}\frac{q^{2n^{2}-n}b^{2n}}{\left(q^{2};q^{2}\right)_{n}}
A_{q}\left(q^{2n-1}ab\right)&=&\sum_{n=0}^{\infty}
\frac{q^{n^{2}-n}\left(-ab\right)^{n}}{\left(q;q\right)_{n}}
A_{q^{2}}\left(-q^{2n-1}b^{2}\right).\label{eq:exponential 21}
  \eea
  \end{thm}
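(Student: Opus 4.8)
The plan is to recognize all three identities as two evaluations of a single integral, plus the tautology that equates them. Concretely, \eqref{eq1.9} and \eqref{eq:exponential 18} share the \emph{same} left-hand side once we put $z=b$: both reduce to $(b;q)_\infty\,{}_1\phi_1(a;b;q,-b)$. Hence once \eqref{eq1.9} and \eqref{eq:exponential 18} are proved, identity \eqref{eq:exponential 21} follows with no extra work — it is exactly the statement that the two right-hand sides, both equal to this common left-hand side, agree. So it suffices to produce \eqref{eq1.9} and \eqref{eq:exponential 18}, and I would obtain them from one integrand expanded in two complementary ways.

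First I would convert the common left-hand side into an integral using \eqref{eq:1}. Specializing to $\alpha=0$ and replacing the free parameter of \eqref{eq:1} by $-bq^{-1/2}$ (so the argument of the ${}_1\phi_1$ becomes $-b$ and its lower parameter becomes $b$) gives
\[
(b;q)_\infty\,{}_1\phi_1(a;b;q,-b)=\frac{1}{\sqrt{\pi\log q^{-2}}}\int_{-\infty}^{\infty}\frac{(abq^{-1/2}e^{ix};q)_\infty\,\exp(x^2/\log q^2)}{(-bq^{-1/2}e^{ix};q)_\infty\,(bq^{-1/2}e^{ix};q)_\infty}\,dx .
\]
The two lower factors collapse, via $(u;q)_\infty(-u;q)_\infty=(u^2;q^2)_\infty$ with $u=bq^{-1/2}e^{ix}$, into the single base-$q^2$ factor $(b^2q^{-1}e^{2ix};q^2)_\infty$. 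Everything then hinges on how I expand this one integrand.

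To reach the $A_q$ form \eqref{eq1.9} (written with the letter $z$), I would expand \emph{only} the denominator by $1/(t;q^2)_\infty=\sum_{n\ge0}t^{n}/(q^2;q^2)_n$ with $t=z^2q^{-1}e^{2ix}$, keep the numerator Pochhammer intact, interchange sum and integral, and recognize each resulting integral through \eqref{eq:2} in the rearranged form $\int_{-\infty}^{\infty}(wq^{1/2}e^{ix};q)_\infty\,e^{i\alpha x+x^2/\log q^2}\,dx=\sqrt{\pi\log q^{-2}}\,q^{\alpha^2/2}A_q(q^{\alpha}w)$; the choice $w=azq^{-1}$, $\alpha=2n$ yields the factor $q^{2n^2}A_q(q^{2n-1}az)$, and collecting powers of $q$ gives precisely the right-hand side of \eqref{eq1.9}. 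To reach the $A_{q^2}$ form \eqref{eq:exponential 18}, I would instead expand the numerator as well, by Euler's formula $(t;q)_\infty=\sum_{k\ge0}(-1)^k q^{\binom{k}{2}}t^{k}/(q;q)_k$, so that both factors become exponentials in $e^{ix}$; each term then reduces to the Gaussian moment $\int_{-\infty}^{\infty}e^{i\mu x+x^2/\log q^2}\,dx=\sqrt{\pi\log q^{-2}}\,q^{\mu^2/2}$ (the $z=0$ case of \eqref{eq:qe1}), and resumming the denominator index reassembles $A_{q^2}(-q^{2k-1}b^2)$ with coefficient $(-ab)^k q^{k^2-k}/(q;q)_k$, i.e.\ the right-hand side of \eqref{eq:exponential 18}.

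The main obstacle is analytic rather than algebraic: justifying the termwise integration in both expansions. Since the relevant $q$-products are entire and the weight $\exp(x^2/\log q^2)$ decays faster than any exponential, the series of integrands should converge absolutely and uniformly on the real line, so Fubini or dominated convergence applies; making this rigorous requires a uniform bound on $1/(b^2q^{-1}e^{2ix};q^2)_\infty$ together with a check that the contour avoids the zeros of that denominator, which is exactly where the hypothesis $z\neq q^{-n}$ (and the analogous restriction on $b$) is used. A secondary but essential bookkeeping point is the careful tracking of the half-integer powers of $q$ arising from the $q^{-1/2}$ shifts and from $\binom{k}{2}$ versus $k^2/2$, because it is their exact cancellation that turns the raw double sums into the clean exponents $q^{2n^2-n}$ and $q^{k^2-k}$.
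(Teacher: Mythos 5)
Your proof is correct, but it takes a genuinely different route from the paper's. The paper does not rederive \eqref{eq1.9} at all: it cites it as equation (7.32) of the companion paper \cite{Ism:Zha3}, and then obtains \eqref{eq:exponential 18} by expanding $A_{q}(q^{2n-1}az)$ as its defining series and interchanging the two summations, an interchange whose formal statement is precisely \eqref{eq:exponential 21}. You instead derive both \eqref{eq1.9} and \eqref{eq:exponential 18} from scratch out of the single Gaussian integral obtained from \eqref{eq:1} with $\alpha=0$ and the parameter specialization that turns the argument into $-b$, by expanding the one integrand in two complementary ways (denominator only, recognized termwise via \eqref{eq:2}; numerator and denominator both, reduced to the Gaussian moment that is the $z=0$ case of \eqref{eq:qe1} and then resummed), after which \eqref{eq:exponential 21} is the tautological equality of the two right-hand sides. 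Your bookkeeping checks out: the exponent $(k+2n)^{2}/2+\binom{k}{2}-k/2-n=k^{2}-k+2n^{2}-n+2kn$ reassembles the inner sum into $A_{q^{2}}(-q^{2k-1}b^{2})$ with coefficient $(-ab)^{k}q^{k^{2}-k}/(q;q)_{k}$, exactly as required. Your route buys self-containedness, since the paper leans on an external citation for \eqref{eq1.9}; the paper's route buys brevity, and its double-series interchange is really the discrete shadow of your two integrand expansions, so it sidesteps the contour and convergence issues you rightly flag. One small inaccuracy in your closing remark: the real-line zeros of $(b^{2}q^{-1}e^{2ix};q^{2})_{\infty}$ occur when $|b|=q^{1/2-j}$, $j\in\mathbb{N}_{0}$, not when $b=q^{-j}$, so the hypothesis $z\neq q^{n}$, $n=0,-1,-2,\dots$ is not what keeps the contour clear of poles; it is there to keep $(z;q)_{n}$ in the ${}_{1}\phi_{1}$ nonzero, and the identity at the moduli excluded by the integral representation is recovered by analytic continuation of both sides rather than directly from the integral.
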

  Formula \eqref{eq1.9} is (7.32) in our paper \cite{Ism:Zha3} while 
  \eqref{eq:exponential 18} is the result of writing $A_q(q^{2n-1}az)$ 
  as a sum then interchange the two sums, which is then stated formally as \eqref{eq:exponential 21}.  
  \begin{cor}\label{cor1}
  We have 
  \begin{equation}
(-z;q)_{\infty}=\sum_{n=0}^{\infty}\frac{q^{2n^{2}-n}z^{2n}}{(q^{2};q^{2})_{n}}A_{q}\left(-q^{2n-1}z\right),\label{eq1.12}
\end{equation}
  \end{cor}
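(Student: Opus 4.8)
The plan is to obtain Corollary \ref{cor1} as a one-parameter specialization of Theorem \ref{thm2}. Setting $a=-1$ in \eqref{eq1.9}, the summand on the right becomes $\frac{q^{2n^{2}-n}z^{2n}}{(q^{2};q^{2})_{n}}A_{q}\!\left(-q^{2n-1}z\right)$, so the right-hand side of \eqref{eq1.9} coincides verbatim with the right-hand side of \eqref{eq1.12}. It therefore remains only to evaluate the prefactored ${}_1\phi_1$ on the left, namely $\left(z;q\right)_{\infty}\,{}_{1}\phi_{1}\left(-1;z;q,-z\right)$, and to check that it collapses to $(-z;q)_{\infty}$.

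For that evaluation I would invoke the confluent $q$-Gauss sum \eqref{eqqCGauss}. Matching ${}_{1}\phi_{1}\left(a;c;q,c/a\right)$ against our series with $a=-1$ and $c=z$, the argument obtained from the specialization is exactly $-z=z/(-1)=c/a$, so the balancing hypothesis of \eqref{eqqCGauss} is met on the nose. Hence
\[
{}_{1}\phi_{1}\left(-1;z;q,-z\right)=\frac{(-z;q)_{\infty}}{(z;q)_{\infty}},
\]
and multiplying by the prefactor $(z;q)_{\infty}$ cancels the denominator, leaving precisely $(-z;q)_{\infty}$, which is \eqref{eq1.12}.

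There is essentially no analytic obstacle: the entire content of the argument is the single observation that the choice $a=-1$ simultaneously reproduces the desired right-hand side of \eqref{eq1.12} and forces the ${}_1\phi_1$ into the self-balanced shape $c/a$ demanded by the confluent $q$-Gauss evaluation. The only point that will require a word of care is admissibility: Theorem \ref{thm2} is asserted for all $a,z\in\C$ subject only to $z\ne q^{n}$ ($n=0,-1,-2,\dots$), so the value $a=-1$ is legitimate, and \eqref{eqqCGauss} holds as an identity of analytic functions of $z$ off that same excluded set. Thus no convergence or continuation issue arises beyond what is already guaranteed upstream, and the corollary follows immediately.
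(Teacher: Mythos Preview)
Your proof is correct and follows essentially the same approach as the paper: obtain the corollary as a one-parameter specialization of \eqref{eq1.9} in Theorem~\ref{thm2}. The only (minor) difference is the choice of parameter. The paper takes $a=1$, so that $(1;q)_n=0$ for $n\ge 1$ forces ${}_{1}\phi_{1}(1;z;q,-z)=1$ trivially, giving $(z;q)_\infty=\sum_{n\ge 0}\frac{q^{2n^{2}-n}z^{2n}}{(q^{2};q^{2})_{n}}A_{q}(q^{2n-1}z)$; the substitution $z\mapsto -z$ then yields \eqref{eq1.12}. You instead take $a=-1$, which lands the right-hand side of \eqref{eq1.9} directly on the right-hand side of \eqref{eq1.12} but requires the confluent $q$-Gauss sum \eqref{eqqCGauss} to evaluate the left-hand side. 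Both routes are one-line specializations; the paper's avoids invoking \eqref{eqqCGauss} at the cost of the trivial change of variable, while yours hits the target sign in $A_q$ immediately.
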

  This is just the case $a=1$ of \eqref{eq1.9}.  Recall the definition of the Jackson  modified $q$-Bessel functions, using the  
  notation from, \cite{Ism82},
\begin{eqnarray}
I_\nu^{(1)}(z;q) &=&  \frac{(q^{\nu+1};q)_\infty} {(q;q)_\infty} 
\Sum \frac{   (z/2)^{\nu+2n}}{(q, q^{\nu+1};q)_n}, \quad    |z| < 2, 
\label{eqJnu1}\\
I_\nu^{(2)}(z;q) &=& \frac{(q^{\nu+1};q)_\infty} {(q;q)_\infty}  
\Sum \frac{  q^{n(n+\nu)}}{(q, q^{\nu+1};q)_n} (z/2)^{\nu+2n},
\label{eqJnu2}\\
I_\nu^{(3)}(z;q) &=& \frac{(q^{\nu+1};q)_\infty} {(q;q)_\infty}  
\Sum \frac{  q^{\binom{n}{2}}}{(q, q^{\nu+1};q)_n} (z/2)^{\nu+2n}.  
\label{eqJnu3}
\end{eqnarray}

    In our work \cite{Ism:Zha4} we 
  proved Theorem \ref{thm3} below. 
 \begin{thm}\label{thm3}
We have the inverse pair
\bea
J_{\nu}^{(2)}\left(2z;q\right)&=&\frac{z^{\nu}}{\left(q;q\right)_{\infty}}
\sum_{k=0}^{\infty}\frac{\left(-q^{\nu}\right)^{k}}{\left(q;q\right)_{k}}
q^{\binom{k+1}{2}}A_{q}\left(q^{\nu+k}z^{2}\right), 
\label{eq:series and identities bessel 5 a}\\
\frac{z^{\nu}A_{q}\left(q^{\nu}z^{2}\right)}{\left(q;q\right)_{\infty}} &=&
\sum_{k=0}^{\infty}\frac{q^{k^{2}}}{\left(q;q\right)_{k}}
\left(\frac{q^{\nu}}{z}\right)^{k}J_{k+\nu}^{(2)}\left(2z;q\right).
\label{eq:series and identities bessel 5 b}
\eea
\end{thm}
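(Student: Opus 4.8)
The plan is to read Theorem~\ref{thm3} as a single change of basis between two families of entire functions and then verify that the two coefficient arrays are mutually inverse. Put $f_\nu:=J_\nu^{(2)}(2z;q)$ and $g_\nu:=z^\nu A_q(q^\nu z^2)/(q;q)_\infty$; absorbing the prefactors, the two assertions become
\[
 f_\nu=\sum_{k=0}^\infty \frac{(-1)^k q^{\nu k+\binom{k+1}{2}}}{(q;q)_k}\,z^{-k}\,g_{\nu+k},
 \qquad
 g_\nu=\sum_{k=0}^\infty \frac{q^{k^2+\nu k}}{(q;q)_k}\,z^{-k}\,f_{\nu+k}.
\]
First I would prove the left-hand expansion \eqref{eq:series and identities bessel 5 a} directly, then prove the right-hand expansion \eqref{eq:series and identities bessel 5 b} by the same scheme, and finally exhibit the two arrays as mutual inverses, which is what the phrase ``inverse pair'' records.

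For \eqref{eq:series and identities bessel 5 a} I would expand the Ramanujan function, $A_q(q^{\nu+k}z^2)=\sum_{j\ge0}\frac{q^{j^2}}{(q;q)_j}(-q^{\nu+k}z^2)^j$, substitute it on the right, and interchange the two summations; this is legitimate because the $q^{j^2}$ and $q^{\binom{k+1}{2}}$ factors render the double series absolutely convergent for $|q|<1$ and all $z$. Collecting the coefficient of $z^{\nu+2m}$ leaves the inner sum $\sum_{k\ge0}\frac{(-1)^k q^{\binom{k+1}{2}+(\nu+m)k}}{(q;q)_k}$, and since $\binom{k+1}{2}=\binom{k}{2}+k$ this is $\sum_{k\ge0}\frac{q^{\binom{k}{2}}}{(q;q)_k}(-q^{\nu+m+1})^k=(q^{\nu+m+1};q)_\infty$ by Euler's identity. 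The series that results is exactly the defining power series of $J_\nu^{(2)}(2z;q)$ (the sign-alternating analogue of \eqref{eqJnu2}), which proves the identity.

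The right-hand expansion \eqref{eq:series and identities bessel 5 b} yields to the same expand-and-interchange treatment, but the relevant summation is different: after collecting the coefficient of $z^{2j}$ one is left, with $\mu:=\nu+j$, needing $\sum_{k\ge0}\frac{q^{k^2+\mu k}}{(q;q)_k}(q^{\mu+k+1};q)_\infty=1$, i.e. $\sum_{k\ge0}\frac{q^{k^2+\mu k}}{(q;q)_k(q^{\mu+1};q)_k}=\frac{1}{(q^{\mu+1};q)_\infty}$. This is precisely the $a\to\infty$ confluent limit of the $q$-Gauss sum \eqref{eqqCGauss}, so the identity follows at once. To confirm that \eqref{eq:series and identities bessel 5 a} and \eqref{eq:series and identities bessel 5 b} are genuinely an inverse pair, I would substitute the second relation into the first and collect the coefficient of $z^{-n}f_{\nu+n}$; after simplifying the exponent it collapses to the terminating sum
\[
 \frac{q^{\nu n+n^2}}{(q;q)_n}\sum_{k=0}^n(-1)^k q^{\binom{k}{2}}\bigl(q^{1-n}\bigr)^k \gauss{n}{k}
 =\frac{q^{\nu n+n^2}}{(q;q)_n}\,\bigl(q^{1-n};q\bigr)_n ,
\]
evaluated in closed form by the finite $q$-binomial theorem. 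For $n\ge1$ the product $(q^{1-n};q)_n$ contains the factor $1-q^{0}=0$, so the coefficient equals $\delta_{n,0}$; since both arrays are unit triangular in the shift index, this one composite being the identity forces the reverse composite to be the identity as well.

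I do not expect a serious analytic obstacle: every series in sight is dominated by a Gaussian $q^{k^2}$ factor, so absolute convergence and the interchange of summations are routine for $|q|<1$. The real work is the bookkeeping of the $q$-exponents at the two decisive steps, so that the inner sums present themselves in exactly the forms summable by Euler's identity and by the confluent limit of \eqref{eqqCGauss}; recognizing that the second inner sum is that confluent $q$-Gauss sum, rather than an Euler-type sum, is the one point that requires care.
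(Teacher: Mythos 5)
Your proof is correct, and it is worth noting at the outset that this paper does not actually prove Theorem \ref{thm3}: the authors state ``In our work \cite{Ism:Zha4} we proved Theorem \ref{thm3} below'' and import it from Part~I, where the standard device is the Gaussian integral representations of type \eqref{eq:1}--\eqref{eq:sw1} (represent $A_q$ and $J_\nu^{(2)}$ as Fourier--Gauss integrals, exchange sum and integral, and read off the expansion). Your route is different and entirely self-contained: you expand $A_q$ (respectively $J_{k+\nu}^{(2)}$) as a power series, interchange the absolutely convergent double sums, and evaluate the inner sum in closed form --- by Euler's identity $\sum_k q^{\binom{k}{2}}t^k/(q;q)_k=(-t;q)_\infty$ for \eqref{eq:series and identities bessel 5 a}, and by the Cauchy identity $\sum_k q^{k^2+\mu k}/\bigl((q;q)_k(q^{\mu+1};q)_k\bigr)=1/(q^{\mu+1};q)_\infty$ (the $a\to\infty$ confluence of \eqref{eqqCGauss}) for \eqref{eq:series and identities bessel 5 b}; I checked the exponent bookkeeping in both inner sums and in your terminating $\delta_{n,0}$ composition, and all of it is right, including the vanishing of $(q^{1-n};q)_n$ for $n\ge1$. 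The justification of the interchange is also sound, since the joint exponents $\tfrac{k^2+k}{2}+kj+j^2$ and $k^2+kj+j^2$ are positive definite. What the integral-representation method buys is uniformity --- one transform formula generates this identity alongside \eqref{eq1.18}, \eqref{eq:series and identities sw 3}, \eqref{eq:series and identities airy 1} and many others --- while your argument buys elementarity: it needs nothing beyond two classical summation formulas already quoted in the introduction, and it makes the ``inverse pair'' structure explicit via the unitriangular matrix composition, which the paper only asserts implicitly by the phrasing of the theorem. The one purely cosmetic caveat is that the composition check is a consistency verification rather than a logically necessary step, since both displayed identities are already proved directly.
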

 In the same work we also proved that 
 \bea 
\frac{A_{q}\left(z\right)}{\left(zq^{2};q^{2}\right)_{\infty}}
&=&\sum_{k=0}^{\infty}\frac{\left(-z\right)^{k}q^{k^{2}}}
{\left(q^2,zq^2;q^2\right)_{k}}.
\label{eq1.18}\\
S_{n}\left(x;q\right)&=&\frac{1}{\left(q;q\right)_{n}} 
 \sum_{k=0}^{\infty}\frac{\left(xq^{n}\right)^{k}}{\left(q;q\right)_{k}}
 q^{\binom{k+1}{2}}
  A_{q}\left(q^{k}x\right), \label{eq:series and identities sw 3}\\
A_{q}\left(ab\right) &=& \sum_{k=0}^{\infty}\frac{\left(b;q\right)_{k}}
{\left(q;q\right)_{k}}
 q^{\binom{k+1}{2}}a^{k}A_{q}\left(aq^{k}\right), 
\label{eq:series and identities airy 1}
\eea
see equations (7.6), (7.8), and (7.2), respectively,  in   \cite{Ism:Zha4}. 
The Stieltjes--Wigert polynomials $S_n$ in 
\eqref{eq:series and identities sw 3} are defined by 
\bea
S_n(x) = \sum_{k=0}^n \frac{q^{k^2}}{(q;q)_k(q;q)_{n-k}} (-z)^k.
\label{eqdefSWpol}
\eea
In Section 2 we also discuss some consequences of the results stated 
so far. This leads, among other 
things,  to new generating functions for the Schur polynomials.   
Section 3 contains a master identity for bilateral $q$-series and some 
noteworthy special and limiting cases of it. Section 4 has an extensive 
list of new identities involving the Ramanujan function and special cases 
of them lead to $m$-versions of formulas on the Slater list \cite{Sla}. It 
also contains several new results on the Stieltjes--Wigert polynomials.

   \section{Theorems \ref{thm1}-\ref{thm3}}
   It is obvious that \eqref{eqmform} is nothing but
\begin{equation}
A_{q}(-q^{m})=\sum_{n=0}^{\infty}\frac{q^{n^{2}+mn}}{(q;q)_{n}}=\frac{(-1)^{m}q^{-\binom{m}{2}}a_{m}(q)}{(q,q^{4};q^{5})_{\infty}}-\frac{(-1)^{m}q^{-\binom{m}{2}}b_{m}(q)}{(q^{2},q^{3};q^{5})_{\infty}}.\label{eq2.1}
\end{equation}
This formula will be used repeatedly in this work. 

  Corollary \ref{cor1} is the case $a= 1$ of  Theorem \ref{thm2}.
  
\begin{proof}[Proof of Theorem \ref{thm1}]
We take $z =q$ in Cor \ref{cor1} and apply \eqref{eqmform} to obtain 
\begin{eqnarray*}
(-q;q)_{\infty} & = & \sum_{n=0}^{\infty}
\frac{q^{2n^{2}+n}}{(q^{2};q^{2})_{n}}
A_{q}\left(-q^{2n}\right)\\
 & = & \sum_{n=0}^{\infty}\frac{q^{2n^{2}+n}}{(q^{2};q^{2})_{n}}
 \left(\frac{q^{-\binom{2n}{2}}a_{2n}(q)}{(q,q^{4};q^{5})_{\infty}}
 -\frac{q^{-\binom{2n}{2}}b_{2n}(q)}{(q^{2},q^{3};q^{5})_{\infty}}\right),
\end{eqnarray*}
 which simplifies to \eqref{eq:6}. Similarly 
 $z=q^{2}$ in \eqref{eq1.12} leads to \eqref{eq1.8}. 
\end{proof}

 Let $a=0$ in (\ref{eq1.9}) to get
\begin{equation}
\left(z;q\right)_{\infty}\sum_{n=0}^{\infty}\frac{q^{\binom{n}{2}}z^{n}}
{(q,z;q)_{n}}=\sum_{n=0}^{\infty}\frac{q^{2n^{2}-n}z^{2n}}
{(q^{2};q^{2})_{n}}.
\notag
\end{equation}
Obviously the above equation is 
\bea
A_{q^2}(-z^2/q) = \left(z;q\right)_{\infty}\sum_{n=0}^{\infty}
\frac{q^{\binom{n}{2}}z^{n}}{(q,z;q)_{n}},
\label{eq2.5} 
\eea
which relates the Ramanujan function to a modified $q$-Bessel 
function $I_\nu^{(3)}$.  The special case $z = q^{m+1/2}$ is of interest 
and gives 
   \bea
   \bg
  \left(q^{m+1/2};q\right)_{\infty}\sum_{n=0}^{\infty}
\frac{q^{mn+n^{2}/2}}
{(q,q^{m+1/2};q)_{n}}\qquad \qquad\qquad \\
\qquad\qquad= (-1)^mq^{2\binom{m}{2}}\left[
\frac{a_m(q^2)}{(q^2, q^8;q^{10})_\infty} - \frac{b_m(q^2)}{(q^4, q^6;q^{10})_\infty}
\right]. 
\eg
 \eea
 It is clear that the left-hand side is a $q$-analogue of the fact that 
 $I_{m+1/2}$ is a linear combination of $I_{\pm1/2}$ with coefficients 
 related to the Lommel polynomials. 
 The cases $m=0, 1$ are 
\bea 
\sum_{n=0}^{\infty}\frac{q^{n^{2}}}{(q;q)_{2n}}
&=& \frac{1}{\left(q; q^2\right)_{\infty}
(q^4, q^{16};q^{20})_\infty}, 
\label{eq2.7}\\
\sum_{n=0}^{\infty}\frac{q^{n(n+2)}}{(q;q)_{2n}}
&=& \frac{1}{\left(q; q^3\right)_{\infty}
(q^8, q^{12};q^{20})_\infty}, 
\label{eq2.8}
\eea
The identity \eqref{eq2.7} is  (79) on Lucy Slater's list, \cite{Sla}, but 
\eqref{eq2.8} is not on her list.

For $\ell\in\mathbb{N}_{0}$, we let $a=-q^{\ell+1}/z$ in (\ref{eq1.9})
and establish 
\begin{eqnarray*}
&{}& \left(z;q\right)_{\infty}{}_{1}\phi_{1}\left(-q^{\ell+1}/z;z;q,-z\right)  
=  \sum_{n=0}^{\infty}\frac{q^{2n^{2}-n}z^{2n}}{(q^{2};q^{2})_{n}}A_{q}
\left(-q^{2n+\ell}\right)\\
 & = & \frac{(-1)^{\ell}q^{-\binom{\ell}{2}}}{(q,q^{4};q^{5})_{\infty}}
 \sum_{n=0}^{\infty}\frac{a_{2n+\ell}(q)}{(q^{2};q^{2})_{n}}\left(\frac{z}
 {q^{\ell}}\right)^{2n}
 -  \frac{(-1)^{\ell}q^{-\binom{\ell}{2}}}{(q^{2},q^{3};q^{5})_{\infty}}
 \sum_{n=0}^{\infty}\frac{b_{2n+\ell}(q)}{(q^{2};q^{2})_{n}}\left(\frac{z}
 {q^{\ell}}\right)^{2n}.
\end{eqnarray*}
 This proves the following theorem. 
 \begin{thm}\label{thmgfanbn} If $\ell\in\mathbb{N}_{0}$, $z \notin 
 \{1, q^{-1}, q^{-2}, \cdots\}$ then
\begin{equation}
\begin{aligned} & \left(z;q\right)_{\infty}{}_{1}\phi_{1}\left(-q^{\ell+1}/z;z;q,-z\right)\\
 & =\frac{(-1)^{\ell}q^{-\binom{\ell}{2}}}{(q,q^{4};q^{5})_{\infty}}
 \sum_{n=0}^{\infty}\frac{a_{2n+\ell}(q)}{(q^{2};q^{2})_{n}}\left(\frac{z}
 {q^{\ell}}\right)^{2n} -\frac{(-1)^{\ell}q^{-\binom{\ell}{2}}}
 {(q^{2},q^{3};q^{5})_{\infty}}\sum_{n=0}^{\infty}\frac{b_{2n+\ell}(q)}
 {(q^{2};q^{2})_{n}}\left(\frac{z}{q^{\ell}}\right)^{2n}.
\end{aligned}
\label{eq:10}
\end{equation}
\end{thm}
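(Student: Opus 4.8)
The plan is to specialize the master identity \eqref{eq1.9} at the single value $a=-q^{\ell+1}/z$ and then substitute the Schur-polynomial evaluation \eqref{eq2.1} of the Ramanujan function at negative powers of $q$. First I would set $a=-q^{\ell+1}/z$ in \eqref{eq1.9}. The parameter $a$ enters the right-hand side only through the argument $q^{2n-1}az$ of $A_q$, and with this choice it collapses to $q^{2n-1}\cdot(-q^{\ell+1}/z)\cdot z=-q^{2n+\ell}$, free of $z$. Since the hypothesis $z\notin\{1,q^{-1},q^{-2},\dots\}$ is precisely the condition under which \eqref{eq1.9} holds, the specialization is legitimate and gives
\[
\left(z;q\right)_{\infty}{}_{1}\phi_{1}\left(-q^{\ell+1}/z;z;q,-z\right)=\sum_{n=0}^{\infty}\frac{q^{2n^{2}-n}z^{2n}}{(q^{2};q^{2})_{n}}A_{q}\left(-q^{2n+\ell}\right).
\]

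Next I would insert \eqref{eq2.1} with $m=2n+\ell$. The sign $(-1)^{2n+\ell}=(-1)^{\ell}$ is independent of $n$, so that common factor together with the two products $1/(q,q^{4};q^{5})_{\infty}$ and $1/(q^{2},q^{3};q^{5})_{\infty}$ pulls outside the summation, leaving inside each sum only the Schur polynomial $a_{2n+\ell}(q)$ (respectively $b_{2n+\ell}(q)$) multiplied by the accumulated $q$-power $q^{2n^{2}-n}z^{2n}q^{-\binom{2n+\ell}{2}}$.

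The one genuine computation is the bookkeeping of this exponent, and it is the step I would check most carefully. Expanding
\[
\binom{2n+\ell}{2}=\tfrac12(2n+\ell)(2n+\ell-1)=2n^{2}+2n\ell-n+\binom{\ell}{2},
\]
the power telescopes to
\[
q^{2n^{2}-n}\,z^{2n}\,q^{-\binom{2n+\ell}{2}}=q^{-\binom{\ell}{2}}\left(zq^{-\ell}\right)^{2n},
\]
so all the $n$-dependence in the prefactor reduces to exactly $(z/q^{\ell})^{2n}$, with the single overall constant $q^{-\binom{\ell}{2}}$ extracted. Reassembling the two products and the sign $(-1)^{\ell}$ then reproduces \eqref{eq:10} term by term. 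No interchange of summation or convergence question arises beyond what \eqref{eq1.9} already supplies, so the only place where a slip is likely is the cross term $2n\ell$ in the binomial expansion, the very term responsible for the $q^{-\ell}$ that makes the variable in the two sums $(z/q^{\ell})^{2n}$ rather than $z^{2n}$.
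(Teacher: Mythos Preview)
Your proof is correct and follows exactly the same approach as the paper: specialize \eqref{eq1.9} at $a=-q^{\ell+1}/z$ so that the argument of $A_q$ becomes $-q^{2n+\ell}$, then substitute the Schur-polynomial evaluation \eqref{eq2.1} and simplify the $q$-exponent. Your careful expansion of $\binom{2n+\ell}{2}$ makes explicit the simplification that the paper leaves to the reader.
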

 Let $z=-q^{\ell+1}$ in (\ref{eq1.9}) to get 
\begin{equation}
\left(-q^{\ell+1};q\right)_{\infty}=(-1)^{\ell}q^{-\binom{\ell}{2}}
\sum_{n=0}^{\infty} \left[\frac{a_{2n+\ell}(q)q^{2n}}
{(q,q^{4};q^{5})_{\infty}(q^{2};q^{2})_{n}}-\frac{b_{2n+\ell}(q)q^{2n}}
{(q^{2},q^{3};q^{5})_{\infty}(q^{2};q^{2})_{n}}\right].
\label{eq:11}
\end{equation}
 
From \eqref{eq:series and identities airy 1} to obtain 
\bea
\qquad A_{q}\left(-b\right)=\frac{1}{\left(q,q^{4};q^{5}\right)_{\infty}}
\sum_{k=0}^{\infty}\frac{\left(b;q\right)_{k}q^{k}a_{k}\left(q\right)}
{\left(q;q\right)_{k}}-\frac{1}{\left(q^{2},q^{3};q^{5}\right)_{\infty}}
\sum_{k=0}^{\infty}\frac{\left(b;q\right)_{k}q^{k}b_{k}\left(q\right)}
{\left(q;q\right)_{k}}. \label{eq2.9}
\eea
The special case $b = q^m$ of \eqref{eq2.9} is the interesting identity 
\begin{eqnarray}
\bg
  (-1)^mq^{\binom{m}{2}}\left[
 \frac{a_m(q)} {(q^{1},q^{4},q^{5};q^5)_\infty}
 -   \frac{b_m(q)}{(q^{2},q^{3},q^{5};q^5}_\infty \right] \\
  =   \frac{1}{(q^{1},q^{4},q^{5};q^5)_\infty}  \sum_{k=0}^{\infty}
  \frac{\left(q^{m};q\right)_{k}}{\left(q;q\right)_{k}}q^{k}a_{k}\left(q\right)
  -  \frac{1}{(q^{2},q^{3},q^{5};q^{5})_{\infty}}\sum_{k=0}^{\infty}\frac{\left(q^{m};q\right)_{k}}{(q;q)_{k}}q^{k}b_{k}(q).
  \label{eq2.10}
  \eg
\end{eqnarray}
 The further specialization, $m=1$ ,  is also interesting 
\bea
1+\sum_{k=0}^{\infty}b_{k}\left(q\right)q^{k}=
\frac{\left(q^{2},q^{3};q^{5}\right)_{\infty}}
{\left(q,q^{4};q^{5}\right)_{\infty}}
\sum_{k=0}^{\infty}a_{k}\left(q\right)q^{k}. \label{eq2.11}
\eea
The identities \eqref{eq2.10} and \eqref{eq2.11} should have very 
interesting partition theoretic interpretations.

\begin{thm}
For any 
 $m,n\in\mathbb{N}$
we have
\begin{eqnarray}
\label{eqthm2.2}
\bg
  (-1)^mq^{-\binom{m}{2}}\left[
 \frac{a_m(q)} {(q^{1},q^{4},q^{5};q^5)_\infty}
 -   \frac{b_m(q)}{(q^{2},q^{3},q^{5};q^5}_\infty \right] \\
   = (-1)^{m+n} \sum_{k=0}^{\infty}\frac{q^{k(k+m)}}{(q;q)_{k}}
 q^{-\binom{m+n+2k}{2}}\left[ \frac{a_{m+n+2k}(q)} 
 {(q ,q^{4},q^{5};q^5)_\infty}
  -   \frac{b_{m+n+2k}(q)}{(q^{2},q^{3},q^{5};q^5}_\infty \right].
\eg
\eea
\end{thm}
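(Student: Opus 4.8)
The plan is to strip the Rogers--Ramanujan infinite products off both members and recast the whole statement as a single identity for the Ramanujan function $A_q$, then to derive that identity from the expansion \eqref{eq:series and identities airy 1}, in exactly the way \eqref{eq2.9} and \eqref{eq2.10} were obtained. First I would observe that each triple product here carries one extra factor relative to the products in \eqref{eq2.1}, namely
\be
(q,q^4,q^5;q^5)_\infty=(q^5;q^5)_\infty\,(q,q^4;q^5)_\infty,\qquad (q^2,q^3,q^5;q^5)_\infty=(q^5;q^5)_\infty\,(q^2,q^3;q^5)_\infty,
\ee
so that the common factor $1/(q^5;q^5)_\infty$ cancels from both sides at the outset. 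After this cancellation I would invoke \eqref{eq2.1} in the form $A_q(-q^M)=(-1)^Mq^{-\binom{M}{2}}\left[\tfrac{a_M(q)}{(q,q^4;q^5)_\infty}-\tfrac{b_M(q)}{(q^2,q^3;q^5)_\infty}\right]$, once with $M=m$ on the left and once with $M=m+n+2k$ inside the sum on the right. Since $(-1)^{m+n+2k}=(-1)^{m+n}$, the prefactor $(-1)^{m+n}$ in front of the series together with $q^{-\binom{m+n+2k}{2}}$ is exactly what converts each bracket into a clean value of $A_q$, and the left bracket collapses the same way through $(-1)^mq^{-\binom{m}{2}}$.

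This reduces the assertion to a pure Ramanujan-function statement, namely that the series $\sum_{k\ge0}\frac{q^{k(k+m)}}{(q;q)_k}A_q(-q^{m+n+2k})$ reproduces $A_q(-q^m)$. To attack it I would expand each $A_q(-q^{m+n+2k})$ by its defining series $A_q(-q^{M})=\sum_{i\ge0}\frac{q^{i^2+Mi}}{(q;q)_i}$ and interchange the order of summation. The decisive computation is that, after collecting powers of $q$, the exponent in the double sum is $q^{(i+k)^2+m(i+k)+ni}$, so summing over $k$ first collapses the inner series back into a single Ramanujan function: for fixed $i$,
\be
\sum_{k\ge0}\frac{q^{k^2+(m+2i)k}}{(q;q)_k}=A_q(-q^{m+2i}).
\ee
This self-reproducing feature of $A_q$ is the mechanism behind \eqref{eq:series and identities airy 1}, and I expect the cleanest route is to read the identity off \eqref{eq:series and identities airy 1} with the free parameters chosen so that $ab=-q^m$ while the evaluation points form the progression $-q^{m+n},-q^{m+n+2},\dots$; the discrepancy between the weight $q^{\binom{k+1}{2}}$ native to \eqref{eq:series and identities airy 1} and the weight $q^{k^2}=q^{k(k+m)}/q^{mk}$ demanded here is then supplied by the base change already recorded in \eqref{eq:exponential 21} and \eqref{eq1.18}.

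The step I expect to be the main obstacle is precisely this reconciliation of the two shapes of prefactor together with the justification of the interchange of summations. A single application of \eqref{eq:series and identities airy 1} shifts the argument of $A_q$ by $q^{k}$ and carries the Gaussian weight $q^{\binom{k+1}{2}}$, whereas the statement requires a shift by $q^{2k}$ and the weight $q^{k^2}$, so the required form is not a one-line specialization: producing the $q^{2k}$-progression and the $q^{k^2}$ weight forces one either to iterate the shift formula or to route through the base-$q^2$ expansions. Controlling absolute convergence of the resulting double series is then routine, since for $|q|<1$ the factor $q^{k^2}$ dominates every geometric contribution and licenses the rearrangement termwise. Once the interchange is justified and the inner collapse above is applied, restoring the products $(q,q^4;q^5)_\infty$ and $(q^2,q^3;q^5)_\infty$ and the factor $(q^5;q^5)_\infty$ through \eqref{eq2.1} returns the stated identity, in complete parallel with the derivation of \eqref{eq2.9}.
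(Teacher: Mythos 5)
Your opening reduction --- peeling off the common factor $(q^5;q^5)_\infty$ and using \eqref{eq2.1} to rewrite both brackets as values of $A_q$ --- is exactly the right first move and matches the paper. The problem is what comes next. The productless identity you then try to prove, $A_q(-q^m)=\sum_{k\ge0}\frac{q^{k(k+m)}}{(q;q)_k}A_q(-q^{m+n+2k})$, is not established by your computation, and as written it is false. Your ``decisive computation'' correctly evaluates the inner $k$-sum for fixed $i$ as $A_q(-q^{m+2i})$, but that only converts the right-hand side into $\sum_{i\ge0}\frac{q^{i(i+m+n)}}{(q;q)_i}\,A_q(-q^{m+2i})$ --- another sum of the same shape, not $A_q(-q^m)$; the loop never closes. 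Concretely, at $m=n=1$ the left side is $1+q^2+q^3+O(q^4)$ while your right side is $A_q(-q^2)+\frac{q^2}{1-q}A_q(-q^4)+\cdots=1+q^2+2q^3+O(q^4)$, so the two already disagree at order $q^3$.

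What the paper actually does is specialize the two-parameter expansion \eqref{eq:series and identities airy 3} from the companion paper,
\begin{equation*}
A_q(z)=\sum_{k=0}^{\infty}\frac{q^{k^2}}{(q;q)_k}(-z)^k(w;q)_k\,A_q(wzq^{2k}),
\end{equation*}
at $z=-q^m$, $w=q^n$. The factor $(w;q)_k=(q^n;q)_k$ is essential: it is what makes the double sum collapse, since the total exponent is $(k+i)^2+m(k+i)+ni$ and $\sum_{k+i=N}\frac{(q^n;q)_k\,q^{ni}}{(q;q)_k(q;q)_i}=\frac{1}{(q;q)_N}$ (the Cauchy product of the $q$-binomial theorem with the $q$-exponential series). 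That factor has evidently been dropped from the printed statement \eqref{eqthm2.2} by a typo, and your proof attempt inherits the error. Your fallback route through \eqref{eq:series and identities airy 1} cannot substitute for this: as you yourself note, that identity shifts the argument by $q^k$ with weight $q^{\binom{k+1}{2}}$, whereas what is needed is a $q^{2k}$-shift with weight $q^{k^2}$ and the extra $(q^n;q)_k$, and you leave that reconciliation --- which is precisely the missing lemma \eqref{eq:series and identities airy 3} --- unresolved. So the step you flag as ``the main obstacle'' is exactly where the entire proof lives, and the proposal does not supply it.
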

\begin{proof}
In \cite{Ism:Zha4} we proved that 
\begin{equation}
A_{q}\left(z\right)=\sum_{k=0}^{\infty}\frac{q^{k^{2}}}{\left(q;q\right)_{k}}
\left(-z\right)^{k}\left(w;q\right)_{k}A_{q}\left(wzq^{2k}\right).
\label{eq:series and identities airy 3}
\end{equation}
The theorem is the special case  $z = -q^m, w = q^n$ in 
\eqref{eq:series and identities airy 3}.  
\end{proof}
It must be noted that the series on the right-hand side of \eqref{eqthm2.2} 
converges because \eqref{eqmform} shows that 
\bea
\lim_{m\to \infty} \left[\frac{(-1)^m q^{-\binom{m}{2}} a_m(q)}
{(q,q^4;q^5)_\infty}
- \frac{(-1)^{m} q^{-\binom{m}{2}} b_m(q)}{(q^2,q^3;q^5)_\infty}
\right] =1. 
\notag
\eea

Using \eqref{eq1.18} and \eqref{eq2.1} we find that for $m=0, 1,
 \cdots,$ we have 
\bea
\bg
(-1)^m q^{-\binom{m}{2}}\left[\frac{ a_m(q)}
{(q,q^4;q^5)_\infty}
- \frac{ b_m(q)}{(q^2,q^3;q^5)_\infty}\right]   \qquad \qquad\qquad\\
\qquad\qquad\qquad= (-q^{m+2};q^2)_\infty 
\Sum \frac{q^{n^2+mn}}{(q^2, -q^{m+2};q^2)_n}
\eg
\label{eq2.13}
\eea
Here again the cases $m=0$ and $m=1$ are 
\bea
\frac{ 1}
{(q,q^4;q^5)_\infty} &=& (-q^{2};q^2)_\infty 
\Sum \frac{q^{n^2}}{(q^2, -q^{2};q^2)_k}, \label{eq2.14} \\
\frac{ 1}{(q^2,q^3;q^5)_\infty} &=&(-q^{3};q^2)_\infty 
\Sum \frac{q^{n^2+ n}}{(q^2, -q^{3};q^2)_n}.  \label{eq2.15} 
\eea
Neither \eqref{eq2.14} nor  \eqref{eq2.15}  seem to be on the Slater list \cite{Sla}

\section{Bilateral Sums}
 In this section we derive bilateral sum identities. 
\begin{thm} \label{thm8.1}
Let $r\ge s$, $\alpha_{j}\in\mathbb{C},\,1\le j\le r,\ \beta_{k}\in
\mathbb{C},\,1\le k\le s$, 
$|\beta_{1}/\alpha_{1}|<|w|<1$ and $|z|<|w|$ if $r=s+1$. Then 
\begin{equation}
\bg
  \sum_{m=-\infty}^{\infty}\frac{\left(\alpha_{1};q\right)_{m}}
  {\left(\beta_{1};q\right)_{m}}{}_{r}\phi_{s}\begin{pmatrix}
  \begin{array}{c}
\alpha_{1}q^{m},\dots,\alpha_{r}\\
\beta_{1}q^{m},\dots,\beta_{s}
\end{array} & \bigg|q,z\end{pmatrix}w^{m}   \qquad \qquad \qquad \\
 \qquad \qquad \qquad  =\frac{\left(q/(\alpha_{1}w),\alpha_{1}w,
 q,\beta_{1}/\alpha_{1};q\right)_{\infty}}{\left(\beta_{1},q/\alpha_{1},
 w,\beta_{1}/(\alpha_{1}w);q\right)_{\infty}}\; 
 {}_{r-1}\phi_{s-1}\left(\left.  \begin{array}{c}
  \alpha_{2},\dots,\alpha_{r}\\
\beta_{2},\dots,\beta_{s}
\end{array} \right| q , \frac{z}{w}  \right)
\eg
\label{eq3.1}
\end{equation}
\end{thm}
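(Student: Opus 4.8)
The plan is to reduce the bilateral sum to a known bilateral identity by recognizing the structure of the left-hand side. The key observation is that the inner ${}_{r}\phi_{s}$ series, when multiplied by $(\alpha_1;q)_m/(\beta_1;q)_m \, w^m$ and summed over all integers $m$, separates naturally: the summation index $n$ inside the ${}_{r}\phi_{s}$ and the bilateral index $m$ can be interchanged. Writing the inner series explicitly as
\begin{equation}
{}_{r}\phi_{s}\!\left(\begin{array}{c}\alpha_1 q^m,\dots,\alpha_r\\ \beta_1 q^m,\dots,\beta_s\end{array}\bigg|\,q,z\right)
=\sum_{n=0}^{\infty}\frac{(\alpha_1 q^m;q)_n(\alpha_2;q)_n\cdots(\alpha_r;q)_n}{(\beta_1 q^m;q)_n(\beta_2;q)_n\cdots(\beta_s;q)_n(q;q)_n}\,z^n\,(\pm)^{\,\bullet},
\notag
\end{equation}
I would first absorb the $q^m$-dependent factors. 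The identity $(\alpha_1 q^m;q)_n = (\alpha_1;q)_{m+n}/(\alpha_1;q)_m$ and the analogous identity for $\beta_1$ are the crucial tools: they convert $(\alpha_1;q)_m(\alpha_1 q^m;q)_n$ into $(\alpha_1;q)_{m+n}$, and similarly in the denominator. After this absorption, the coefficient of $z^n$ becomes a single bilateral sum over $m$ of the shape $\sum_{m}(\alpha_1;q)_{m+n}/(\beta_1;q)_{m+n}\,w^m$, with the remaining $\alpha_2,\dots,\alpha_r,\beta_2,\dots,\beta_s$ factors carried along as a constant in $m$.

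Second, I would evaluate that inner bilateral sum. Shifting the index $m \mapsto m-n$ turns $\sum_{m}(\alpha_1;q)_{m+n}/(\beta_1;q)_{m+n}\,w^m$ into $w^{-n}\sum_{m}(\alpha_1;q)_{m}/(\beta_1;q)_{m}\,w^{m}$, which is precisely a ${}_{1}\psi_{1}$ bilateral series. Here I would invoke Ramanujan's ${}_1\psi_1$ summation,
\begin{equation}
\sum_{m=-\infty}^{\infty}\frac{(\alpha_1;q)_m}{(\beta_1;q)_m}w^m
=\frac{(q/(\alpha_1 w),\alpha_1 w,q,\beta_1/\alpha_1;q)_\infty}{(\beta_1,q/\alpha_1,w,\beta_1/(\alpha_1 w);q)_\infty},
\notag
\end{equation}
valid exactly under the stated convergence hypothesis $|\beta_1/\alpha_1|<|w|<1$. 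This produces the infinite-product prefactor appearing on the right-hand side of \eqref{eq3.1}, and it emerges as a constant independent of $n$ once the shift has been made.

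Third, I would reassemble the remaining $z$-series. The factor $w^{-n}$ from the index shift combines with $z^n$ to give $(z/w)^n$, and the surviving Pochhammer factors $(\alpha_2;q)_n\cdots(\alpha_r;q)_n/[(\beta_2;q)_n\cdots(\beta_s;q)_n(q;q)_n]$ are exactly the coefficients of a ${}_{r-1}\phi_{s-1}$ evaluated at $z/w$. This yields the right-hand side of \eqref{eq3.1} verbatim. The main obstacle is justifying the interchange of the bilateral summation over $m$ with the series summation over $n$: one must verify absolute convergence of the double series under the hypotheses, and in the balanced case $r=s+1$ the extra condition $|z|<|w|$ is precisely what guarantees that $|z/w|<1$ so that the resulting ${}_{r-1}\phi_{s-1}$ at argument $z/w$ converges. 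I would handle this by establishing a uniform bound on the tails of the ${}_1\psi_1$ sum (its radius of convergence in $w$ being the annulus $|\beta_1/\alpha_1|<|w|<1$) and then applying dominated convergence or Fubini for series; this bookkeeping, rather than any single algebraic step, is where the care is needed.
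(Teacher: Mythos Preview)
Your proposal is correct and follows essentially the same route as the paper: expand the ${}_r\phi_s$, use $(\alpha_1;q)_m(\alpha_1 q^m;q)_n=(\alpha_1;q)_{m+n}$ (and likewise for $\beta_1$), interchange the $m$- and $n$-sums, and evaluate the inner bilateral sum by Ramanujan's ${}_1\psi_1$. The only cosmetic difference is that you shift $m\mapsto m-n$ before invoking ${}_1\psi_1$ (so the product prefactor is manifestly $n$-independent and the factor $w^{-n}$ pops out directly), whereas the paper applies ${}_1\psi_1$ with parameters $\alpha_1 q^n,\beta_1 q^n$ and then simplifies the resulting $n$-dependent infinite products; these are equivalent bookkeepings of the same computation.
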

\begin{proof}
The left-hand side is 
\[
\begin{aligned} 
 & =\sum_{m=-\infty}^{\infty}w^{m}\sum_{n=0}^{\infty}\frac{\left(\alpha_{1};q\right)_{n+m}}{\left(\beta_{1};q\right)_{n+m}}\frac{\left(\alpha_{2},\dots,\alpha_{r};q\right)_{n}z^{n}}{\left(q,\beta_{2},\dots,\beta_{s};q\right)_{n}}\left(-q^{(n-1)/2}\right)^{n(s+1-r)}\\
 & =\sum_{n=0}^{\infty}\frac{\left(\alpha_{1},\dots,\alpha_{r};q\right)_{n}z^{n}}{\left(q,\beta_{1},\dots,\beta_{s};q\right)_{n}}\left(-q^{(n-1)/2}\right)^{n(s+1-r)}\sum_{m=-\infty}^{\infty}\frac{\left(\alpha_{1}q^{n};q\right)_{m}}{\left(\beta_{1}q^{n};q\right)_{m}}w^{m}\\
 & =\sum_{n=0}^{\infty}\frac{\left(\alpha_{1},\dots,\alpha_{r};q\right)_{n}z^{n}}{\left(q,\beta_{1},\dots,\beta_{s};q\right)_{n}}\left(-q^{(n-1)/2}\right)^{n(s+1-r)} \frac{\left(\beta_{1}/\alpha_{1},q,q^{1-n}(\alpha_{1}w),\alpha_{1}wq^{n};q\right)_{\infty}}{\left(\beta_{1}q^{n},\beta_{1}/(\alpha_{1}w),q^{1-n}/\alpha_{1},w;q\right)_{\infty}}\\
 & =\frac{\left(\beta_{1}/\alpha_{1},\alpha_{1}w,q;q\right)_{\infty}}{\left(\beta_{1},\beta_{1}/(\alpha_{1}w),w;q\right)_{\infty}}\sum_{n=0}^{\infty}\frac{\left(\alpha_{1},\dots,\alpha_{r};q\right)_{n}z^{n}}{\left(q,\alpha_{1}w,\beta_{2},\dots,\beta_{s};q\right)_{n}}\left(-q^{(n-1)/2}\right)^{n(s+1-r)}\frac{\left(q^{1-n}/(\alpha_{1}w);q\right)_{\infty}}{\left(q^{1-n}/\alpha_{1};q\right)_{\infty}}\\
 & =\frac{\left(q/(\alpha_{1}w),\alpha_{1}w,q,\beta_{1}/\alpha_{1};q\right)_{\infty}}{\left(\beta_{1},q/\alpha_{1},w,\beta_{1}/(\alpha_{1}w);q\right)_{\infty}}\sum_{n=0}^{\infty}\frac{\left(\alpha_{2},\dots,\alpha_{r};q\right)_{n}\left(z/w\right)^{n}}{\left(q,\beta_{2},\dots,\beta_{s};q\right)_{n}}\left(-q^{(n-1)/2}\right)^{n(s+1-r)}.
\end{aligned}
\]
and the theorem follows.
\end{proof}

\begin{cor} For $|\beta_{1}/\alpha_{1}|<|\alpha_{2}\alpha_{3}z/\beta_{2}|<1$
and $|\alpha_{2}\alpha_{3}|<|\beta_{2}|$ we have 
\begin{equation}
\begin{aligned} & \sum_{m=-\infty}^{\infty}\frac{\left(\alpha_{1};q\right)_{m}}{\left(\beta_{1};q\right)_{m}}{}_{3}\phi_{2}\begin{pmatrix}\begin{array}{c}
\alpha_{1}q^{m},\alpha_{2}, \alpha_3\\
\beta_{1}q^{m},\beta_{2}
\end{array} & \bigg|q,z\end{pmatrix}\left(\frac{\alpha_{2}\alpha_{3}z}{\beta_{2}}\right)^{m}\\
 & =\frac{\left(q\beta_{2}/(\alpha_{1}\alpha_{2}\alpha_{3}z),\alpha_{1}\alpha_{2}\alpha_{3}z/\beta_{2},q,\beta_{1}/\alpha_{1},\beta_{2}/\alpha_{2},\beta_{2}/\alpha_{3});q\right)_{\infty}}
 {\left(\beta_{1},\beta_{2},\beta_{2}/(\alpha_{2}\alpha_{3}),q/\alpha_{1},\alpha_{2}\alpha_{3}z/\beta_{2},\beta_{1}\beta_{2}/(\alpha_{1}\alpha_{2}\alpha_{3}z);q\right)_{\infty}}.
\end{aligned}
\label{eq3.2}
\end{equation}
\end{cor}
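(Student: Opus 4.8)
The plan is to derive this corollary as a single specialization of the master identity \eqref{eq3.1} in Theorem \ref{thm8.1}, combined with one application of the $q$-Gauss summation. I would set $r=3$ and $s=2$ throughout, so that the inner series on the left of \eqref{eq3.1} becomes exactly the ${}_3\phi_2(\alpha_1q^m,\alpha_2,\alpha_3;\beta_1q^m,\beta_2;q,z)$ occurring in \eqref{eq3.2}. The only remaining freedom is the multiplier $w$, and the decisive choice is $w=\alpha_2\alpha_3 z/\beta_2$, which is precisely the base raised to the $m$-th power in the left-hand side of \eqref{eq3.2}. With this identification the left-hand sides of \eqref{eq3.1} and \eqref{eq3.2} coincide, and the entire content of the proof is to show that the right-hand side of \eqref{eq3.1} collapses to the product in \eqref{eq3.2}.

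For that collapse, observe that the residual factor ${}_{r-1}\phi_{s-1}={}_2\phi_1$ on the right of \eqref{eq3.1} is ${}_2\phi_1(\alpha_2,\alpha_3;\beta_2;q,z/w)$, and with $w=\alpha_2\alpha_3 z/\beta_2$ its argument is $z/w=\beta_2/(\alpha_2\alpha_3)$. This is exactly the $q$-Gauss evaluation point, so I would invoke the $q$-Gauss sum ${}_2\phi_1(a,b;c;q,c/(ab))=(c/a,c/b;q)_\infty/(c,c/(ab);q)_\infty$ with $a=\alpha_2$, $b=\alpha_3$, $c=\beta_2$, replacing the ${}_2\phi_1$ by the closed product $(\beta_2/\alpha_2,\beta_2/\alpha_3;q)_\infty/(\beta_2,\beta_2/(\alpha_2\alpha_3);q)_\infty$. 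It then remains to substitute $w=\alpha_2\alpha_3 z/\beta_2$ into the prefactor of \eqref{eq3.1} — using $\alpha_1 w=\alpha_1\alpha_2\alpha_3 z/\beta_2$, $q/(\alpha_1 w)=q\beta_2/(\alpha_1\alpha_2\alpha_3 z)$, and $\beta_1/(\alpha_1 w)=\beta_1\beta_2/(\alpha_1\alpha_2\alpha_3 z)$ — and to merge the two families of $q$-shifted factorials. A direct comparison shows the merged numerator $(q\beta_2/(\alpha_1\alpha_2\alpha_3 z),\alpha_1\alpha_2\alpha_3 z/\beta_2,q,\beta_1/\alpha_1,\beta_2/\alpha_2,\beta_2/\alpha_3;q)_\infty$ and denominator $(\beta_1,q/\alpha_1,\alpha_2\alpha_3 z/\beta_2,\beta_1\beta_2/(\alpha_1\alpha_2\alpha_3 z),\beta_2,\beta_2/(\alpha_2\alpha_3);q)_\infty$ reproduce the right-hand side of \eqref{eq3.2} verbatim, so this step is purely bookkeeping.

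The step that genuinely requires care is the reconciliation of the convergence hypotheses rather than the algebra. Specializing the conditions of Theorem \ref{thm8.1}, the band $|\beta_1/\alpha_1|<|w|<1$ becomes $|\beta_1/\alpha_1|<|\alpha_2\alpha_3 z/\beta_2|<1$, which is the first hypothesis of the corollary; the extra condition $|z|<|w|$, active precisely because $r=s+1$, becomes $|\beta_2|<|\alpha_2\alpha_3|$, equivalently $|z/w|=|\beta_2/(\alpha_2\alpha_3)|<1$. I would emphasize that this last inequality is exactly the region of validity of the $q$-Gauss sum, so the single hypothesis simultaneously justifies the interchange of summation underlying \eqref{eq3.1} and the summability of the inner ${}_2\phi_1$; the two requirements are therefore compatible rather than competing. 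In summary, the proof I would write is: take $(r,s,w)=(3,2,\alpha_2\alpha_3 z/\beta_2)$ in Theorem \ref{thm8.1}, apply the $q$-Gauss sum to the resulting ${}_2\phi_1$ at its natural argument, simplify the product of $q$-shifted factorials, and verify that the specialized convergence band coincides with the stated one.
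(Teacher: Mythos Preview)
Your approach is exactly the paper's: specialize Theorem~\ref{thm8.1} with $r=3$, $s=2$, $w=\alpha_2\alpha_3 z/\beta_2$ and evaluate the residual ${}_2\phi_1(\alpha_2,\alpha_3;\beta_2;q,\beta_2/(\alpha_2\alpha_3))$ by the $q$-Gauss sum. Your derived convergence condition $|\beta_2/(\alpha_2\alpha_3)|<1$ is the correct one needed for both the theorem and the $q$-Gauss sum; the inequality $|\alpha_2\alpha_3|<|\beta_2|$ printed in the corollary is reversed.
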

\begin{proof} Apply Theorem \ref{thm8.1} with $w= z \beta_2/\alpha_2\alpha_3$ and use the $q$-Gauss sum. 
\end{proof}

\begin{cor}
(i)  For $|\beta_{1}/\alpha_{1}|<|w|<1$ and $|z|<|w|$ we have 
\begin{equation}
\begin{aligned} & \sum_{m=-\infty}^{\infty}\frac{\left(\alpha_{1};q\right)_{m}}{\left(\beta_{1};q\right)_{m}}{}_{2}\phi_{1}\begin{pmatrix}\begin{array}{c}
\alpha_{1}q^{m},\alpha_{2}\\
\beta_{1}q^{m}
\end{array} & \bigg|q,z\end{pmatrix}w^{m}\\
 & =\frac{\left(q/(\alpha_{1}w),\alpha_{1}w,q,\beta_{1}/\alpha_{1},a_{2}z/w;q\right)_{\infty}}{\left(\beta_{1},q/\alpha_{1},w,\beta_{1}/(\alpha_{1}w),z/w;q\right)_{\infty}}.
\end{aligned}
\label{eq:3}
\end{equation}
(ii) For $|\beta_{1}/\alpha_{1}|<|\alpha_{2}z/\beta_{2}|<1$ we have
For $|\beta_{1}/\alpha_{1}|<|\alpha_{2}z/\beta_{2}|<1$ we
have
\begin{equation}
\begin{aligned} & \sum_{m=-\infty}^{\infty}\frac{\left(\alpha_{1};q\right)_{m}}{\left(\beta_{1};q\right)_{m}}{}_{2}\phi_{2}\begin{pmatrix}\begin{array}{c}
\alpha_{1}q^{m},\alpha_{2}\\
\beta_{1}q^{m},\beta_{2}
\end{array} & \bigg|q,z\end{pmatrix}\left(\frac{\alpha_{2}z}{\beta_{2}}\right)^{m}\\
 & =\frac{\left(q/(\alpha_{1}w),\alpha_{1}w,q,\beta_{1}/\alpha_{1},\beta_{2}/\alpha_{2};q\right)_{\infty}}{\left(\beta_{1},\beta_{2},q/\alpha_{1},w,\beta_{1}/(\alpha_{1}w);q\right)_{\infty}}.
\end{aligned}
\label{eq:4}
\end{equation}
\end{cor}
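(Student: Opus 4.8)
The plan is to obtain both identities as direct specializations of Theorem~\ref{thm8.1}, the only difference between the two parts being the choice of the upper and lower parameter counts $(r,s)$ and the classical summation formula used to evaluate the residual ${}_{r-1}\phi_{s-1}$ appearing on the right-hand side of \eqref{eq3.1}. In each case the convergence hypotheses stated in the corollary are exactly what the hypotheses of Theorem~\ref{thm8.1} become under the chosen specialization, so no new analytic input is required; the entire content is inherited from the master bilateral identity.

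For part (i) I would set $r=2$ and $s=1$ in Theorem~\ref{thm8.1}. The bilateral series on the left of \eqref{eq3.1} then has precisely the ${}_{2}\phi_{1}$ kernel appearing in \eqref{eq:3}, and the residual factor on the right collapses to a ${}_{1}\phi_{0}$, which the $q$-binomial theorem sums as
\[
{}_{1}\phi_{0}\!\left(\alpha_{2};-;q,\frac{z}{w}\right)=\frac{(\alpha_{2}z/w;q)_{\infty}}{(z/w;q)_{\infty}}.
\]
Multiplying this into the prefactor of \eqref{eq3.1} and merging the two new infinite products into the existing ones yields \eqref{eq:3}; here the entry $a_{2}z/w$ in the displayed statement is plainly a misprint for $\alpha_{2}z/w$. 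Since $r=s+1$ in this case, Theorem~\ref{thm8.1} demands $|\beta_{1}/\alpha_{1}|<|w|<1$ together with $|z|<|w|$, which is exactly the hypothesis of part (i).

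For part (ii) I would instead take $r=s=2$ and make the choice $w=\alpha_{2}z/\beta_{2}$, so that the weight $w^{m}$ in \eqref{eq3.1} becomes the factor $(\alpha_{2}z/\beta_{2})^{m}$ displayed on the left of \eqref{eq:4}. The residual series is then a ${}_{1}\phi_{1}$ evaluated at argument $z/w=\beta_{2}/\alpha_{2}$, which is summable in closed form by the confluent $q$-Gauss sum \eqref{eqqCGauss} with $a=\alpha_{2}$, $c=\beta_{2}$, giving
\[
{}_{1}\phi_{1}\!\left(\alpha_{2};\beta_{2};q,\frac{\beta_{2}}{\alpha_{2}}\right)=\frac{(\beta_{2}/\alpha_{2};q)_{\infty}}{(\beta_{2};q)_{\infty}}.
\]
Inserting this into the prefactor of \eqref{eq3.1} produces \eqref{eq:4}, where the product is written with the shorthand $w=\alpha_{2}z/\beta_{2}$ retained on the right. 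Because $r=s$ rather than $r=s+1$ here, the only convergence requirement carried over from Theorem~\ref{thm8.1} is $|\beta_{1}/\alpha_{1}|<|w|<1$, which under $w=\alpha_{2}z/\beta_{2}$ reads $|\beta_{1}/\alpha_{1}|<|\alpha_{2}z/\beta_{2}|<1$, exactly as stated.

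There is no genuine obstacle: once one recognizes that the terminating-parameter ${}_{1}\phi_{0}$ and the confluent ${}_{1}\phi_{1}$ are evaluable by the $q$-binomial theorem and by \eqref{eqqCGauss} respectively, both parts reduce to bookkeeping. The only points demanding care are routine ones—correctly consolidating the $q$-shifted factorials of the two prefactors into a single quotient, tracking the substitution $w=\alpha_{2}z/\beta_{2}$ in part (ii) so that the advertised domain of convergence comes out right, and flagging the $a_{2}\leftrightarrow\alpha_{2}$ misprint in \eqref{eq:3}.
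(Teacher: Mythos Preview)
Your proposal is correct and follows essentially the same approach as the paper: specialize Theorem~\ref{thm8.1} with $r=2,\,s=1$ and apply the $q$-binomial theorem for part (i), and specialize with $r=s=2$, $w=\alpha_{2}z/\beta_{2}$ and apply the confluent $q$-Gauss sum \eqref{eqqCGauss} for part (ii). The paper's own proof is a single sentence invoking exactly these two ingredients, and you have additionally spelled out the convergence bookkeeping and caught the $a_{2}\leftrightarrow\alpha_{2}$ misprint.
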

\begin{proof} The results  follow from Theorem \ref{thm8.1}   and the 
$q$-binomial theorem  and a confluent limit of the $q$-Gauss theorem. 
\end{proof}

It is important we use the following definition of $q$-shifted factorial,
\begin{equation}
\left(a;q\right)_{\infty}=\prod_{k=0}^{\infty}\left(1-aq^{k}\right), 
\left(a;q\right)_{n}=\frac{\left(a;q\right)_{\infty}}{\left(aq^{n};q\right)_{\infty}},\label{eq:1.1}
\end{equation}
where $a,q,n\in\mathbb{C}$, $|q|<1$, and 
$aq^{n}\neq q^{-k},\ k\in\mathbb{N}_{0}$,
then it is clear that
\begin{equation}
\left(a;q\right)_{n+m}=\left(a;q\right)_{n}\left(aq^{n};q\right)_{m}, 
\quad a,n,m\in\mathbb{C}.\label{eq:1.2}
\end{equation}

In the rest of this section we derive identities involving 
$I_{\nu}^{(2)}(z;q)$. 
The next theorem uses Ramanujan's ${}_1\psi_1$ sum 
\cite[(II.28)]{Gas:Rah}
 \bea
 \label{eqRam1psi1}
 \sum_{-\infty}^\infty \frac{(a;q)_n}{(b;q)_n} z^n 
 = \frac{(q, b/a, az, q/az;q)_\infty}{(b, q/a, z, b/az;q)_\infty}, \quad 
 |b/a|< |z|<1. 
\eea
\begin{thm} If 
$|q^{\nu+1}/a|< |w|<1$, then 
\bea 
\bg
\sum_{m=-\infty}^{\infty}(a;q)_{m}\left( \frac{w}{z}\right)^{m}
I_{m+\nu}^{(2)}(2z;q)  \qquad  \qquad  \qquad \\
  =\frac{z^{\nu}\left(q^{\nu+1}/a,aw,q/aw;q\right)_{\infty}}
  {\left(q/a,q^{\nu+1}/aw, w;q\right)_{\infty}}
 \sum_{n=0}^{\infty}\left(-\frac{z^2q^{\nu+1}}{aw}\right)^{n}
 q^{\binom{n}{2}}
 \frac{(w;q)_{n}}{(q,q^{\nu+1}/a;q)_{n}}\\
 = \frac{z^{\nu}\left(q^{\nu+1}/a,aw,q/aw;q\right)_{\infty}}
  {\left(q/a,q^{\nu+1}/aw, w;q\right)_{\infty}} {}_1\f_1(w;q^{\nu+1}/a;q, 
  z^2q^{\nu+1}/aw).
\eg
\label{eq:5}
\eea
\end{thm}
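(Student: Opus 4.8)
The plan is to derive \eqref{eq:5} as a direct application of Theorem \ref{thm8.1} (the master bilateral identity) to the modified $q$-Bessel function $I_{m+\nu}^{(2)}$, together with the confluent $q$-Gauss sum \eqref{eqqCGauss}. First I would unfold the definition \eqref{eqJnu2} of $I_{m+\nu}^{(2)}(2z;q)$, namely
\[
I_{m+\nu}^{(2)}(2z;q)=\frac{(q^{m+\nu+1};q)_\infty}{(q;q)_\infty}\sum_{n=0}^\infty \frac{q^{n(n+m+\nu)}}{(q,q^{m+\nu+1};q)_n}z^{m+\nu+2n},
\]
so that the bilateral sum on the left of \eqref{eq:5} becomes a double sum over $m\in\mathbb{Z}$ and $n\ge 0$. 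The factor $(w/z)^m$ cancels the $z^m$ coming from $z^{m+\nu+2n}$, leaving $w^m$, which is exactly the shape demanded by the master formula. The substantive bookkeeping is to rewrite the $m$-dependence $(a;q)_m\,(q^{m+\nu+1};q)_\infty/(q^{m+\nu+1};q)_n$ in the form $(\alpha_1;q)_m/(\beta_1;q)_m$ times factors independent of $m$: using \eqref{eq:1.2} one sees $(q^{\nu+1};q)_{m+n}=(q^{\nu+1};q)_n(q^{\nu+1+n};q)_m$ and the ratio organizes into $(a;q)_m/(q^{\nu+1};q)_m$, identifying $\alpha_1=a$ and $\beta_1=q^{\nu+1}$.

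Second, I would match the remaining data to Theorem \ref{thm8.1}. After peeling off $m$, the residual $n$-sum together with the internal series structure should be read as the degenerate case $r=1,\ s=0$ of the ${}_r\phi_s$ appearing inside the master sum, so that the internal hypergeometric collapses and only the prefactor plus a single outer series survives. Concretely, I expect the substitution $\alpha_1=a$, $\beta_1=q^{\nu+1}$, $w=w$, to send the product of infinite $q$-shifted factorials in \eqref{eq3.1} to
\[
\frac{(q/(aw),aw,q,q^{\nu+1}/a;q)_\infty}{(q^{\nu+1},q/a,w,q^{\nu+1}/(aw);q)_\infty},
\]
which, after combining with the $z^\nu$ and the surviving $(q^{\nu+1};q)_\infty/(q;q)_\infty$ normalization from $I^{(2)}$, reproduces the prefactor $z^\nu(q^{\nu+1}/a,aw,q/aw;q)_\infty/[(q/a,q^{\nu+1}/aw,w;q)_\infty]$ displayed in \eqref{eq:5}. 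The convergence hypothesis $|q^{\nu+1}/a|<|w|<1$ is precisely $|\beta_1/\alpha_1|<|w|<1$, so Theorem \ref{thm8.1} applies verbatim; this is why I would state the hypotheses in that exact form.

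Third, to obtain the middle expression in \eqref{eq:5} I would track the $n$-dependent terms that do not get absorbed by the master sum. The quadratic exponent $q^{n(n+\nu)}$ from \eqref{eqJnu2}, combined with the $q^{\binom{n}{2}}$-type Gaussian factor produced by shifting $(\alpha_1 w)$-factorials through $q^n$ in the master derivation, should assemble into $(-z^2 q^{\nu+1}/(aw))^n q^{\binom{n}{2}}/[(q,q^{\nu+1}/a;q)_n]$ times $(w;q)_n$, giving the explicit series in the middle line. The final equality — recognizing that series as ${}_1\phi_1(w;q^{\nu+1}/a;q,z^2q^{\nu+1}/aw)$ — is immediate from the definition of the ${}_1\phi_1$, since the $q^{\binom{n}{2}}$ factor is exactly the confluent weight $(-1)^n q^{\binom{n}{2}}$ built into a ${}_1\phi_1$ with one numerator parameter and one denominator parameter.

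The main obstacle I anticipate is the accurate management of the shifted factorials with $m$-dependent indices when interchanging the order of summation and extracting the $n$-sum: the identity \eqref{eq:1.2} must be applied carefully to the triple of factors $(a;q)_m$, $(q^{\nu+1};q)_{m+n}$, and the $q^{n(n+m+\nu)}$ exponent, and one must verify that the $q^{nm}$ cross-term recombines correctly with the $w^m$ to keep the inner series a genuine bilateral ${}_1\phi_0$ in $m$ (so that the master theorem's hypothesis $r=s+1$ with $|z|<|w|$ is met, here with $z$ replaced by the appropriate $q^n$-scaled argument). Once that regrouping is verified, the rest is the routine cancellation described above, and the three displayed forms in \eqref{eq:5} follow in sequence.
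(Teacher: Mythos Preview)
Your computational plan---expand $I_{m+\nu}^{(2)}(2z;q)$ via \eqref{eqJnu2}, interchange the $m$- and $n$-sums, evaluate the inner bilateral $m$-sum, and simplify the remaining $n$-series---is correct and is exactly what the paper does. The gap is in the framing: this is \emph{not} an instance of Theorem~\ref{thm8.1}. That theorem requires the inner ${}_r\phi_s$ to carry $\alpha_1 q^m$ as a numerator parameter and $\beta_1 q^m$ as a denominator parameter, with the series argument $z$ independent of $m$. When you expand the Bessel function you obtain
\[
I_{m+\nu}^{(2)}(2z;q)=\frac{z^{\nu+m}(q^{\nu+1};q)_\infty}{(q;q)_\infty\,(q^{\nu+1};q)_m}\,{}_0\phi_1\!\left(-;\,q^{\nu+1+m};\,q,\,z^{2}q^{\nu+1+m}\right),
\]
so the inner series has \emph{no} numerator parameter at all and its argument depends on $m$ through $q^m$. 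Your proposed ``$r=1,\ s=0$'' degeneration would remove $\beta_1$ from the ${}_r\phi_s$, rendering the template of Theorem~\ref{thm8.1} meaningless; and the hypothesis ``$|z|<|w|$'' there refers to a fixed $z$, not one shifted by $q^n$.

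What the paper does instead---and what your detailed bookkeeping actually amounts to---is to apply Ramanujan's ${}_1\psi_1$ sum \eqref{eqRam1psi1} directly. After swapping sums the inner piece is
\[
\sum_{m=-\infty}^{\infty}\frac{(a;q)_m}{(q^{n+\nu+1};q)_m}\,(wq^{n})^{m},
\]
which is summed by \eqref{eqRam1psi1} with numerator $a$ (unshifted), denominator $q^{\nu+1+n}$, and argument $wq^{n}$; the resulting $n$-series is then recognized as the displayed ${}_1\phi_1$. So Theorem~\ref{thm8.1} and the present theorem are siblings proved by the same ${}_1\psi_1$ mechanism, not parent and child. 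Replace ``apply Theorem~\ref{thm8.1}'' with ``apply \eqref{eqRam1psi1}'' and your outline becomes the paper's proof.
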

\begin{proof}
The definition \eqref{eqJnu2} implies that 
\[
I_{m+\nu}^{(2)}(z;q)=\frac{(z/2)^{\nu}}{(q;q)_{\infty}}\sum_{n=0}^{\infty}
\frac{q^{n^{2}+n\nu}}{(q;q)_{n}}\left(\frac{z^{2}}{4}\right)^{n}
\left(q^{n+\nu+1};q\right)_{\infty}\frac{\left(zq^{n}/2\right)^{m}}
{(q^{n+\nu+1};q)_{m}}.
\]
Assuming $|q^{\nu+1}/a| < |wz/2|<1$ we use the above expansion 
and establish the generating function 
\[
\begin{aligned} & \sum_{m=-\infty}^{\infty}(a;q)_{m}w^{m}
I_{m+\nu}^{(2)}(z;q)=\frac{(z/2)^{\nu}}{(q;q)_{\infty}}
\sum_{n=0}^{\infty}\frac{q^{n^{2}+n\nu}}{(q;q)_{n}}
\left(\frac{z^{2}}{4}\right)^{n}\left(q^{n+\nu+1};q\right)_{\infty}\\
 & \times\sum_{m=-\infty}^{\infty}
 \frac{(a;q)_{m}\left(wzq^{n}/2\right)^{m}}
 {\left(q^{n+\nu+1};q\right)_{m}}=\frac{(z/2)^{\nu}}{(q;q)_{\infty}}
 \sum_{n=0}^{\infty}\frac{q^{n^{2}+n\nu}}{(q;q)_{n}}
 \left(\frac{z^{2}}{4}\right)^{n}\left(q^{n+\nu+1};q\right)_{\infty}\\
 & \times
 \frac{\left(q,q^{n+\nu+1}/a,awzq^{n}/2,2q/(awzq^{n});q\right)_{\infty}}
 {\left(q^{n+\nu+1},q/a,wzq^{n}/2,2q^{\nu+1}/(awz);q\right)_{\infty}}\\
 & =\frac{(z/2)^{\nu}\left(q^{\nu+1}/a,awz/2,2q/(awz);q\right)_{\infty}}
 {\left(q/a,2q^{\nu+1}/(awz),wz/2;q\right)_{\infty}}
 \sum_{n=0}^{\infty}\left(- \frac{zq^{\nu+1}} {2aw}\right)^{n}
 q^{\binom{n}{2}}\frac{\left(wz/2;q\right)_{n}}
 {\left(q,q^{\nu+1}/a;q\right)_{n}},
\end{aligned}
\]
where the Ramanujan ${}_1\psi_1$ was used in the second 
to last step. This establishes our theorem. 
\end{proof}

\section{Series Involving $A_q$ and $S_n$}
 In this section we prove the identities contained in Theorems 
 \ref{Thm5.1} and \ref{Thm5.2}  and consider some of their  corollaries.  
The proofs uses the identities 
\bea
S_{n}\left(ab;q\right)&=& b^{n}\sum_{k=0}^{n}\frac{\left(b^{-1};q\right)_{k}
(-1)^kq^{-nk}q^{\binom{k+1}{2}}}{\left(q;q\right)_{k}}S_{n-k}
\left(aq^{k};q\right),
\label{eq:stieltjes 5.4}\\
S_{2n+1}\left(q^{-2n-1};q\right)&=&0,\quad S_{2n}\left(q^{-2n};q\right)=
\frac{\left(-1\right)^{n}q^{-n^{2}}}{\left(q^{2};q^{2}\right)_{n}}.
\label{eq:stieltjes 5.6}\\
S_{n}\left(-q^{-n+1/2};q\right)&=&\frac{q^{-\left(n^{2}-n\right)/4}}
{\left(q^{1/2};q^{1/2}\right)_{n}}, 
 \quad S_{n}\left(-q^{-n-1/2};q\right)=\frac{q^{-\left(n^{2}+n\right)/4}}
{\left(q^{1/2};q^{1/2}\right)_{n}}. 
\label{eq:stieltjes 5.8}
\eea
These identities were proved in Section 6 of our paper \cite{Ism:Zha3}. 

\begin{thm}\label{Thm5.1}
For $n=0,1,\dots$we have
\bea
\frac{q^{\binom{n}{2}}S_{n}\left(bq^{-n};q\right)}{\left(-b\right)^{n}}
&=&\sum_{j=0}^{\left\lfloor n/2\right\rfloor }
\frac{q^{j^{2}-j}\left(-1\right)^{j}}{\left(q^{2};q^{2}\right)_{j}}
\frac{\left(b^{-1};q\right)_{n-2j}}{\left(q;q\right)_{n-2j}},
\label{eq:stieltjes 31}\\
\frac{S_{n}\left(-q^{-n+1/2}b;q\right)q^{\binom{n}{2}}}{\left(-b\right)^{n}}
&=& \sum_{k=0}^{n}\frac{q^{\left(k^{2}-k\right)/4}}
{\left(q^{1/2};q^{1/2}\right)_{k}}\frac{\left(b^{-1};q\right)_{n-k}}
{\left(q;q\right)_{n-k}} (-1)^k,\label{eq:stieltjes 33}\\
\frac{S_{n}\left(-q^{-n-1/2}b;q\right)q^{\binom{n}{2}}}{\left(-b\right)^{n}}
&=& \sum_{k=0}^{n}\frac{q^{\left(k^{2}-3k\right)/4}}
{\left(q^{1/2};q^{1/2}\right)_{k}}\frac{\left(b^{-1};q\right)_{n-k}}
{\left(q;q\right)_{n-k}}(-1)^k,\label{eq:stieltjes 35}
\eea
\end{thm}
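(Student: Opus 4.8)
The plan is to obtain all three identities from the single connection formula \eqref{eq:stieltjes 5.4} by specializing its two free parameters so that each reduced polynomial $S_{n-k}(aq^{k};q)$ on the right collapses to one of the explicitly evaluated special values recorded in \eqref{eq:stieltjes 5.6} and \eqref{eq:stieltjes 5.8}. Writing the parameters in \eqref{eq:stieltjes 5.4} as $a$ and $b$, I keep its $b$ equal to the $b$ of the theorem and read off the product $ab$ from the argument on the left of whichever identity I am proving; I then choose $a$ so that $aq^{k}$ is precisely the argument at which the reduced polynomial is known in closed form. Multiplying through by the prefactor $q^{\binom{n}{2}}/(-b)^{n}$ that appears on the left of \eqref{eq:stieltjes 31}--\eqref{eq:stieltjes 35} turns the problem into a bookkeeping check on signs and powers of $q$.

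For \eqref{eq:stieltjes 31} I would take $a=q^{-n}$, so that $S_{n-k}(aq^{k};q)=S_{n-k}(q^{-(n-k)};q)$. By \eqref{eq:stieltjes 5.6} this vanishes whenever $n-k$ is odd, so only the terms with $n-k=2j$, $0\le j\le\lfloor n/2\rfloor$, survive, each contributing the value $(-1)^{j}q^{-j^{2}}/(q^{2};q^{2})_{j}$. Substituting $k=n-2j$, the surviving factors $(b^{-1};q)_{n-2j}/(q;q)_{n-2j}$ appear exactly as on the right of \eqref{eq:stieltjes 31}, and one is left to verify that the accumulated sign is $(-1)^{j}$ and the accumulated power of $q$ is $q^{j^{2}-j}$.

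For \eqref{eq:stieltjes 33} and \eqref{eq:stieltjes 35} I would instead take $a=-q^{-n+1/2}$ and $a=-q^{-n-1/2}$ respectively, so that $S_{n-k}(aq^{k};q)=S_{n-k}(-q^{-(n-k)+1/2};q)$ or $S_{n-k}(-q^{-(n-k)-1/2};q)$ is given by the two evaluations in \eqref{eq:stieltjes 5.8}. Now every term survives; reindexing the connection-formula sum by $k\mapsto n-k$ places the running index on the $(q^{1/2};q^{1/2})$-factor coming from the special value, which matches the shape of the stated right-hand sides, while the two choices of sign in \eqref{eq:stieltjes 5.8} carry the exponents $q^{-(k^{2}-k)/4}$ and $q^{-(k^{2}+k)/4}$ that ultimately distinguish \eqref{eq:stieltjes 33} from \eqref{eq:stieltjes 35}.

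The only genuine computation, common to all three cases, is the exponent bookkeeping, and it rests on the elementary identity
\[
\binom{n}{2}-nk+\binom{k+1}{2}=\binom{n-k}{2},
\]
combining the $q^{\binom{n}{2}}$ prefactor with the connection-formula weight $q^{-nk}q^{\binom{k+1}{2}}$. Adding the power of $q$ carried by the special value (namely $-j^{2}$ for \eqref{eq:stieltjes 31} after using $n-k=2j$, and $-(k^{2}\mp k)/4$ for \eqref{eq:stieltjes 33} and \eqref{eq:stieltjes 35}) reproduces the exponents $j^{2}-j$, $(k^{2}-k)/4$, and $(k^{2}-3k)/4$ respectively; the sign tracking gives $(-1)^{j}$ in the first case and $(-1)^{k}$ in the other two. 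I expect the main obstacle to be purely clerical: keeping the reindexing $k\leftrightarrow n-k$ consistent and the various quarter-integer exponents straight, while confirming that the $b$-dependent factorials $(b^{-1};q)$ and $(q;q)$ land on the correct index after the substitution. No new estimate or construction is required.
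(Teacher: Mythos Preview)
Your proposal is correct and is exactly the approach the paper uses: the paper's proof consists of the single sentence that \eqref{eq:stieltjes 31} follows from \eqref{eq:stieltjes 5.4} together with \eqref{eq:stieltjes 5.6}, and that \eqref{eq:stieltjes 33}--\eqref{eq:stieltjes 35} follow from \eqref{eq:stieltjes 5.4} together with the two evaluations in \eqref{eq:stieltjes 5.8}. Your write-up simply fills in the specializations $a=q^{-n}$, $a=-q^{-n\pm 1/2}$ and the exponent bookkeeping (via $\binom{n}{2}-nk+\binom{k+1}{2}=\binom{n-k}{2}$) that the paper leaves implicit.
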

\begin{proof} The proof of \eqref{eq:stieltjes 31} follows from 
\eqref{eq:stieltjes 5.4} and \eqref{eq:stieltjes 5.6}.  Formulas 
\eqref{eq:stieltjes 33} and \eqref{eq:stieltjes 35} follow from 
\eqref{eq:stieltjes 5.4} and both parts of \eqref{eq:stieltjes 5.8}. 
\end{proof}

The proof of the next theorem makes use of 
\bea
I_{\nu}^{(2)}\left(2q^{-n/2};q\right)=\frac{q^{\nu n/2}
S_{n}\left(-q^{-\nu-n};q\right)}{\left(q^{n+1};q\right)_{\infty}}
=\frac{ q^{-\nu n/2}S_{n}\left(-q^{\nu-n};q\right)}
{\left(q^{n+1};q\right)_{\infty}}, 
\label{eqspecialbessel}
\eea
from our work \cite{Ism:Zha3}. 

\begin{thm}\label{Thm5.2} The Ramanujan function $A_q$ satisfies 
the following summation theorems
\bea
\sum_{k=0}^{\infty}\frac{q^{\binom{k+1}{2}}A_{q}\left(aq^{k-2n}\right)}
{\left(q;q\right)_{k}}&=& (-q;q)_\infty(aq^{1-2n};q^2)_\infty, \label{eq5.8}
\\
\sum_{k=0}^{\infty}\frac{q^{\binom{k}{2}}\left(-q^{3/2}\right)^{k}
A_{q}\left(-q^{k-n+1/2}\right)}{\left(q;q\right)_{k}}&=&
\frac{\left(q;q\right)_{n}}{q^{\left(n^{2}-n\right)/4}
\left(q^{1/2};q^{1/2}\right)_{n}},\label{eq:stieltjes 39}
\\
\sum_{k=0}^{\infty}\frac{q^{k^{2}/2}\left(-1\right)^{k}
A_{q}\left(-q^{k-n-1/2}\right)}{\left(q;q\right)_{k}}&=&\frac{\left(-1\right)^{n}
\left(q;q\right)_{n}}{q^{\left(n^{2}+n\right)4}
\left(q^{1/2};q^{1/2}\right)_{n}}.
\label{eq:stieltjes 40}
\eea
Moreover we have the expansions
\bea
A_{q}\left(w\right)&=&\left(wq;q\right)_{\infty}\sum_{n=0}^{\infty}
\frac{q^{3n^{2}}\left(-w^{2}\right)^{n}}
{\left(q^{2},wq,wq^{2};q^{2}\right)_{n}}. \label{eq:stieltjes 41}\\
A_{q}\left(w\right)&=&\left(-wq^{1/2};q\right)_{\infty}\sum_{n=0}^{\infty}
\frac{q^{n\left(3n-1\right)/4}\left(-w\right)^{n}}
{\left(q^{1/2};q^{1/2}\right)_{n}\left(-wq^{1/2};q\right)_{n}},
\label{eq:stieltjes 42}\\
A_{q}\left(w\right)&=&\left(-wq^{3/2};q\right)_{\infty}\sum_{n=0}^{\infty}
\frac{q^{n\left(3n+1\right)/4}\left(-w\right)^{n}}
{\left(q^{1/2};q^{1/2}\right)_{n}\left(-wq^{3/2};q\right)_{n}}.
\label{eq:stieltjes 43}
\eea
\end{thm}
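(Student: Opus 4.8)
The plan is to treat the theorem as two clusters sharing a single engine: the Stieltjes--Wigert/Ramanujan expansion \eqref{eq:series and identities sw 3}, namely $(q;q)_nS_n(x;q)=\sum_{k\ge0}\frac{(xq^n)^k}{(q;q)_k}q^{\binom{k+1}{2}}A_q(q^kx)$, together with the closed evaluations \eqref{eq:stieltjes 5.6} and \eqref{eq:stieltjes 5.8} of $S_n$ at $q^{-2n}$ and $-q^{-n\pm1/2}$. The three summation formulas \eqref{eq5.8}--\eqref{eq:stieltjes 40} read this expansion forwards, while the three expansions \eqref{eq:stieltjes 41}--\eqref{eq:stieltjes 43} come out of matching power-series coefficients in $w$.

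For the summation formulas I would argue as follows. Setting $x=-q^{-n+1/2}$ in \eqref{eq:series and identities sw 3} makes the prefactor $(xq^n)^k=(-q^{1/2})^k$, and since $q^{\binom{k+1}{2}}(-q^{1/2})^k=q^{\binom{k}{2}}(-q^{3/2})^k$ the right-hand side is exactly the $k$-sum in \eqref{eq:stieltjes 39}; the left-hand side $(q;q)_nS_n(-q^{-n+1/2};q)$ is read off from the first evaluation in \eqref{eq:stieltjes 5.8}. Taking $x=-q^{-n-1/2}$ gives $q^{\binom{k+1}{2}}(-q^{-1/2})^k=(-1)^kq^{k^2/2}$, reproducing \eqref{eq:stieltjes 40} through the second evaluation in \eqref{eq:stieltjes 5.8}. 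For \eqref{eq5.8} the case $a=1$ is the same mechanism with $x=q^{-2n}$, index $2n$, and $S_{2n}(q^{-2n};q)$ from \eqref{eq:stieltjes 5.6}; for general $a$ the cleanest route is direct. Expanding $A_q(aq^{k-2n})=\sum_m\frac{q^{m^2}(-aq^{k-2n})^m}{(q;q)_m}$ and interchanging sums, the inner $k$-sum is Euler's identity $\sum_k\frac{q^{\binom{k+1}{2}}q^{mk}}{(q;q)_k}=(-q^{m+1};q)_\infty=(-q;q)_\infty/(-q;q)_m$; using $(q;q)_m(-q;q)_m=(q^2;q^2)_m$ the surviving $m$-sum is the base-$q^2$ Euler product $(aq^{1-2n};q^2)_\infty$, which is the right-hand side.

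For the three expansions I would match coefficients of $w^m$. On the right I first use $(-wq^{1/2};q)_\infty/(-wq^{1/2};q)_n=(-wq^{n+1/2};q)_\infty$ (and its analogues for \eqref{eq:stieltjes 41} and \eqref{eq:stieltjes 43}) and expand the resulting product by Euler, turning each expansion into a double sum; comparison with $A_q(w)=\sum_m\frac{q^{m^2}(-w)^m}{(q;q)_m}$ then collapses everything to a single terminating identity. For \eqref{eq:stieltjes 41} this is $\sum_{n}(-1)^nq^{n^2-n}(q;q^2)_n\gauss{m}{2n}=q^{\binom{m}{2}}$, and for \eqref{eq:stieltjes 42} it is $\sum_{n}(-1)^{m-n}q^{(n^2-n)/4}\frac{(q;q)_m}{(q^{1/2};q^{1/2})_n(q;q)_{m-n}}=q^{m^2/2}$, with \eqref{eq:stieltjes 43} reducing to the same shape via the second value in \eqref{eq:stieltjes 5.8}; each follows from the terminating $q$-binomial theorem, or by induction on $m$ through the $q$-Pascal rule. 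Structurally these are the companions of the summation formulas, and the route the paper flags is to instead feed \eqref{eqspecialbessel} into the inverse pair of Theorem \ref{thm3}, so that the $A_q$-series is rewritten through modified $q$-Bessel functions that collapse to Stieltjes--Wigert values at the special arguments.

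I expect the summation formulas to be essentially immediate once the substitution is chosen. The real work sits in the expansions, and the main obstacle is twofold: proving the terminating identities above, which mix the three bases $q$, $q^{1/2}$, $q^2$ and hence are not a single off-the-shelf summation, and justifying the interchange of the double sums. The infinite-product prefactors make both sides entire in $w$ with geometrically small coefficients, so absolute convergence holds for every $w$, but this should be recorded before rearranging. If one prefers the Bessel route, the analogous difficulty is that \eqref{eqspecialbessel} holds only at the discrete arguments $2q^{-N/2}$, so one must still descend to matching coefficients of $w$ (or argue by analytic continuation off that discrete set) in order to reach the identities for continuous $w$.
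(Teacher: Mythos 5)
Your proposal is correct and is essentially the paper's own proof: for \eqref{eq5.8} the paper performs exactly your double-sum computation (expand $A_q$, swap sums, Euler's $\sum_k q^{\binom{k+1}{2}}q^{mk}/(q;q)_k=(-q^{m+1};q)_\infty$, then $(q;q)_m(-q;q)_m=(q^2;q^2)_m$ and the base-$q^2$ Euler product), and for \eqref{eq:stieltjes 41}--\eqref{eq:stieltjes 43} it matches coefficients of $w^m$ and identifies the inner sums via their generating functions $(t^2;q^2)_\infty/(t;q)_\infty=(-t;q)_\infty$ and $(t;q^{1/2})_\infty/(t;q)_\infty=(tq^{1/2};q)_\infty$, which is precisely the clean form of your terminating identities. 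The only cosmetic divergence is that for \eqref{eq:stieltjes 39}--\eqref{eq:stieltjes 40} the paper specializes the Bessel expansion \eqref{eqIassumofAq} at $\nu=\pm 1/2$, $z=q^{-n/2}$ and then invokes \eqref{eqspecialbessel}, which by that very formula amounts to your direct substitution of the special arguments into \eqref{eq:series and identities sw 3} together with \eqref{eq:stieltjes 5.8}.
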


The special cases $a =1$ and $a =q$ of \eqref{eq5.8} are worth recording. For $n >0$ they are 
\bea
\sum_{k=0}^{\infty}\frac{q^{\binom{k+1}{2}}A_{q}\left(q^{k-2n-1}\right)}
{\left(q;q\right)_{k}}&=&0,\label{eq:stieltjes 37}\\
\sum_{k=0}^{\infty}\frac{q^{\binom{k+1}{2}}A_{q}\left(q^{k-2n}\right)}{\left(q;q\right)_{k}}&=&\left(-1\right)^{n}q^{-n^{2}}\left(q;q^{2}\right)_{n},\label{eq:stieltjes 38}
\eea
 
\begin{proof}[Proof of Theorem of \ref{Thm5.2}]
The left-hand side of \eqref{eq5.8} is 
\bea
\notag
\bg
\sum_{k,m=0}^\infty \frac{(-a)^m q^{m^2+ \binom{k+1}{2}}}
{(q;q)_k(q;q)_m}  q^{km-2nm}= \sum_{m=0}^\infty
 \frac{(-a)^m q^{m^2-2mn}} {(q;q)_m} (-q^{m+1};q)_\infty\\
 = (-q;q)_\infty \sum_{m=0}^\infty
 \frac{(-a)^m q^{m^2-2mn}} {(q^2;q^2)_m}, 
\eg
\eea
and we have proved \eqref{eq5.8}.  
Let us first rewrite \eqref{eq:series and identities bessel 5 a}  in the 
equivalent form  
\bea
I_{\nu}^{(2)}\left(2z;q\right)&=&\frac{z^{\nu}}{\left(q;q\right)_{\infty}}
\sum_{k=0}^{\infty}\frac{\left(-q^{\nu}\right)^{k}}{\left(q;q\right)_{k}}
q^{\binom{k+1}{2}}A_{q}\left(-q^{\nu+k}z^{2}\right).  
\label{eqIassumofAq}
\eea
Formulas \eqref{eq:stieltjes 39} and \eqref{eq:stieltjes 40} are the 
special cases $\nu =1/2, -1/2$, respectively, combined with 
\eqref{eqspecialbessel}.  We now come to \eqref{eq:stieltjes 41}. Write  
$(wq;q)_\infty/(qw;q)_{2n}$ as $(wq^{2n+1};q)_\infty$ then expand it into 
powers of $w$ to see that the right-hand side of \eqref{eq:stieltjes 41} 
is 
\bea
\notag
\sum_{k, n=0}^\infty \frac{(-1)^{n+k} w^{2n+k}}{(q;q)_k(q^2;q^2)_n}
q^{3n^2+2nk+\binom{k+1}{2}} 
= \sum_{m=0}^\infty (-w)^m q^{\binom{m+1}{2}} 
\sum_{n=0}^{\lfloor{n/2}\rfloor} \frac{(-1)^nq^{n(n-1)}}{(q^2;q^2)_n(q;q)_{m-2n}}. 
\eea
Denote the $n$-sum by $c_m$. It is easy to see that 
\bea
\notag
\sum_{m=0}^\infty c_m t^m = \frac{(t^2;q^2)_\infty}{(t;q)_\infty} 
= (-t;q)_\infty.
\eea
Thus $c_m = q^{\binom{m}{2}}/(q;q)_m$. This establishes 
\eqref{eq:stieltjes 41}.  The proof of \eqref{eq:stieltjes 42} is similar. 
Its right-hand side is 
\bea
\notag
\bg
\sum_{n=0}^{\infty}\frac{q^{n\left(3n-1\right)/4}\left(-w\right)^{n}}
{\left(q^{1/2};q^{1/2}\right)_{n}} (-wq^{n+1/2};q)_\infty =
\sum_{n,k=0}^{\infty}\frac{q^{n\left(3n-1\right)/4} w^{n+k}}
{\left(q^{1/2};q^{1/2}\right)_{n}}  \frac{(-1)^n}{(q;q)_k}q^{nk+k^2/2}\\
= \sum_{m=0}^\infty w^m q^{m^2/2} 
\sum_{n+k=m}  \frac{(-1)^nq^{n(n-1)/4}}{(q^{1/2};q^{1/2})_n} \;
\frac{1}{(q;q)_k}. 
\eg
\eea
Denoting the inner sum by $d_m$ we find that 
\bea
\sum_{m=0}^\infty d_m t^m = \frac{(t;q^{1/2})_\infty}{(t;q)_\infty} 
= (tq^{1/2};q)_\infty,
\notag
\eea 
and we conclude that $d_m = q^{m^2/2}(-1)^m/(q;q)_m$ and the proof 
of \eqref{eq:stieltjes 42} is complete. The proof of \eqref{eq:stieltjes 43} 
is parallel to the proof of \eqref{eq:stieltjes 42} and will be omitted. 
\end{proof}
The cases $w =-q^m$ of \eqref{eq:stieltjes 41}-\eqref{eq:stieltjes 43} are 
interesting. They are 
\bea
\label{eq4.17}
\bg
\frac{(-1)^m q^{-\binom{m}{2}} a_m(q)}{(q,q^4;q^5)_\infty}
- \frac{(-1)^{m} q^{-\binom{m}{2}} b_m(q)}{(q^2,q^3;q^5)_\infty}\\
= (-q^{m+1};q)_\infty\Sum \frac{(-1)^n q^{3n^2+2mn}}
{(q^2;q^2)_n(-q^{m+1};q)_n},  
\eg
\eea
\bea
\bg  
\frac{(-1)^m q^{-m(m-1)} a_m(q^2)}{(q^2,q^8;q^{10})_\infty}
- \frac{(-1)^{m} q^{-m(m-1)} b_m(q^2)}{(q^4,q^6;q^{10})_\infty} \\
= (q^{2m+1};q)_\infty\Sum \frac{q^{2mn+ n(3n+1)/2}}
{(q;q)_n(q^{2m+1};q^2})_\infty                                                                                       \\
= (q^{2m+3};q)_\infty\Sum \frac{q^{2mn+ n(3n+1)/2}}
{(q;q)_n(q^{2m+3};q^2})_\infty.
\eg
\label{eq4.18}
\eea
The case $m=0$ of \eqref{eq4.18} is on the Slater list, see (44)--(45) in \cite{Sla}.

The structure of the identities \eqref{eq:stieltjes 42} and 
\eqref{eq:stieltjes 43} suggests that we consider the series 
\bea
\notag
\bg
(-wq^{\al+1/2};q)_\infty\Sum \frac{q^{n\left(3n+4c\right)/4}\left(-w\right)^{n}}{\left(q^{1/2};q^{1/2}\right)_{n}\left(-wq^{\al+1/2};q\right)_{n}} \\
= \sum_{n,k=0}^\infty  \frac{(-1)^n q^{n\left(3n+4c\right)/4}w^{n+k}}{\left(q^{1/2};q^{1/2}\right)_{n} (q;q)_k} q^{k(\al+n) +k^2/2}\\
=  \sum_{m=0}^\infty  w^m q^{\al m + m^2/2}
\sum_{n=0}^m \frac{(-1)^n q^{n(c-\al)+n^2/4}}
{(q^{1/2};q^{1/2})_n (q;q)_{m-n}}
\eg
\eea
Denote the $n$ sum by $c_m$. Thus 
\bea
\sum_{m=0}^\infty c_m t^m = \frac{(tq^{c-\al+1/4};q^{1/2})_\infty}
{(t;q)_\infty}.
\notag
\eea
At this stage we take $c= \al -1/4$, or $c= \al -3/4$ and find that 
$c_m = (-1)^mq^{m^2/2}/(q;q)_m$, or 
$c_m = (-1)^mq^{m(m-1)/2}/(q;q)_m$, respectively. This essentially 
leads to  \eqref{eq:stieltjes 42} and 
\eqref{eq:stieltjes 43}.   On the hand hand we have other choices for 
$c$. 

We let $c= \al -1/4 -j/2$. In this case it turned out that there is no loss 
of generality in assuming $\al=0$.  We also assume $j>0$, since 
$j=0$ is already covered. Thus 
\bea
\notag
\bg
(-wq^{1/2};q)_\infty \Sum \frac{q^{n\left(3n-2j-1\right)/4} 
\left(-w\right)^{n}}{\left(q^{1/2};q^{1/2}\right)_{n}\left(-wq^{1/2};q\right)_{n}} \\
=  \sum_{m=0}^\infty  w^m q^{m^2/2} \sum_{n=0}^m 
\frac{(-1)^n q^{-jn/2+n(n-1)/4}}{(q;q)_k(q^{1/2};q^{1/2})_n}. 
\eg
\eea
Again we denote the $n$-sum by $c_m$ and find that 
\bea
\notag
\sum_{m=0}^\infty c_m t^m = \frac{(tq^{-j/2};q^{1/2})_\infty}{(t;q)_\infty} 
= (tq^{-j/2};q^{1/2})_{j-1} (tq^{-1/2};q)_\infty.
\eea
Therefore 
\bea
\notag
c_m = (-1)^m\sum_{s+k=m} {j-1 \brack s}_{q^{1/2}} q^{s(s-1)/4} q^{-js/2}
\frac{q^{k^(k-2)/2}}{(q;q)_k}.
\eea
This proves that 
\bea
\bg
(-wq^{1/2};q)_\infty \Sum \frac{q^{n\left(3n -2j-1\right)/4} 
\left(-w\right)^{n}}{\left(q^{1/2};q^{1/2}\right)_{n}
\left(-wq^{1/2};q\right)_{n}}   \qquad     \qquad    \qquad           \\
\qquad    \qquad    = \sum_{s=0}^j {j-1 \brack s}_{q^{1/2}} 
q^{s(3s-2j-1)/4} w^s  A_q(wq^{s-1}). 
\eg
\eea
It is clear that \eqref{eq:stieltjes 43} is the special case $j=1$, with 
$w \to wq$. 

\begin{lem}
\label{lem:sw1} The Stieltjes-Wigert polynomials satisfy the generating
function 
\begin{equation}
\left(z,zt;q\right)_{\infty}=\sum_{k=0}^{\infty}\left(-z\right)^{k}q^{\binom{k}{2}}S_{k}\left(-tq^{-k};q\right).\label{eq:sw2}
\end{equation}
For $n=0,1,\dots$ we have following evaluations

\begin{equation}
S_{2n+1}\left(q^{-2n-1};q\right)=0,\quad S_{2n}\left(q^{-2n};q\right)=\frac{\left(-1\right)^{n}q^{n-n^{2}}}{\left(q^{2};q^{2}\right)_{n}}.\label{eq:sw3}
\end{equation}
and

\begin{equation}
S_{n}\left(-q^{-n+1/2};q\right)=\frac{q^{-\left(n^{2}-n\right)/4}}{\left(q^{1/2};q^{1/2}\right)_{n}},\ S_{n}\left(-q^{-n-1/2};q\right)=\frac{q^{-\left(n^{2}+n\right)/4}}{\left(q^{1/2};q^{1/2}\right)_{n}}.\label{eq:sw4}
\end{equation}
\end{lem}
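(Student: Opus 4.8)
The plan is to prove the generating function \eqref{eq:sw2} first and then deduce the three evaluations \eqref{eq:sw3}--\eqref{eq:sw4} from it by specializing the free parameter $t$ and comparing coefficients of powers of $z$. The special values are already known from \eqref{eq:stieltjes 5.6} and \eqref{eq:stieltjes 5.8}, so the point of this route is to obtain them uniformly as corollaries of a single generating function rather than as independent computations.

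To prove \eqref{eq:sw2}, I would insert the explicit expansion \eqref{eqdefSWpol} of $S_{k}\left(-tq^{-k};q\right)$ into the right-hand side. This produces a double series in the outer index $k$ and the inner summation variable $j$ with $0\le j\le k$, in which the accumulated power of $q$ is $\binom{k}{2}+j^{2}-kj$. Writing $k=j+m$ with $m\ge0$ and interchanging the two sums, which is legitimate because the factor $q^{\binom{k}{2}}$ forces absolute convergence for every $z,t$, the exponent becomes $\binom{j+m}{2}-jm$. The decisive simplification is that this collapses to $\binom{j}{2}+\binom{m}{2}$, which decouples the two indices and factors the double sum as
\[
\left(\sum_{j=0}^{\infty}\frac{q^{\binom{j}{2}}(-zt)^{j}}{(q;q)_{j}}\right)\left(\sum_{m=0}^{\infty}\frac{q^{\binom{m}{2}}(-z)^{m}}{(q;q)_{m}}\right).
\]
Euler's identity $\sum_{n\ge0}q^{\binom{n}{2}}x^{n}/(q;q)_{n}=(-x;q)_{\infty}$, applied with $x=-zt$ and with $x=-z$, evaluates the two factors as $(zt;q)_{\infty}$ and $(z;q)_{\infty}$, whose product is $\left(z,zt;q\right)_{\infty}$. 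This is \eqref{eq:sw2}.

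For the evaluations I would compare the coefficient of $z^{k}$ on the two sides of \eqref{eq:sw2}, noting that this coefficient on the right is $(-1)^{k}q^{\binom{k}{2}}S_{k}\left(-tq^{-k};q\right)$. Taking $t=-1$ turns the left-hand side into $(z,-z;q)_{\infty}=(z^{2};q^{2})_{\infty}$, whose Euler expansion in base $q^{2}$ contains only even powers of $z$; matching the coefficient of $z^{2n+1}$ forces $S_{2n+1}\left(q^{-2n-1};q\right)=0$, while matching $z^{2n}$, using $\binom{2n}{2}=2n^{2}-n$, yields the value of $S_{2n}\left(q^{-2n};q\right)$ in agreement with \eqref{eq:stieltjes 5.6}. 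Taking $t=q^{1/2}$ and using $(z;q)_{\infty}(zq^{1/2};q)_{\infty}=(z;q^{1/2})_{\infty}$, then expanding the right-hand side by Euler's identity in base $q^{1/2}$, gives the first formula of \eqref{eq:sw4}; the choice $t=q^{-1/2}$, with $(z;q)_{\infty}(zq^{-1/2};q)_{\infty}=(zq^{-1/2};q^{1/2})_{\infty}$, gives the second. Both reproduce \eqref{eq:stieltjes 5.8}.

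I expect the only delicate point to be the exponent bookkeeping: verifying the identity $\binom{j+m}{2}-jm=\binom{j}{2}+\binom{m}{2}$ that drives the factorization, and checking that the quadratic exponents arising in the three specializations reduce to the compact forms $-(n^{2}-n)/4$ and $-(n^{2}+n)/4$. These are routine once the powers of $q^{1/2}$ are tracked carefully, and no analytic subtleties intervene because every series in the argument either terminates or converges absolutely for all values of the parameters.
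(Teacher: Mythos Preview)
Your argument is correct. The derivation of the evaluations \eqref{eq:sw3}--\eqref{eq:sw4} by specializing $t$ in the generating function and reading off coefficients is exactly the strategy the paper uses; only the particular values of $t$ differ cosmetically because the paper carries out the specialization in the shifted form it actually derives rather than in the form \eqref{eq:sw2} as stated.

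Where your route genuinely departs from the paper is in the proof of the generating function \eqref{eq:sw2} itself. The paper obtains it analytically: it invokes the Gaussian-weight integral representations \eqref{eq:qe1} and \eqref{eq:sw1} for $(-zq^{1/2};q)_\infty$ and $S_k$, expands $(zt;q)_\infty/(ze^{ix};q)_\infty$ under the integral, and then integrates term by term. Your proof is purely algebraic: you substitute the explicit polynomial \eqref{eqdefSWpol}, observe the key decoupling identity $\binom{j+m}{2}-jm=\binom{j}{2}+\binom{m}{2}$, and factor the double sum into two independent Euler products. This is more elementary and self-contained---it does not require the integral machinery imported from \cite{Ism:Zha4}---while the paper's approach has the virtue of fitting into the broader programme of the article, where those same integral representations are the engine behind many of the other identities. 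Incidentally, your computation confirms $S_{2n}(q^{-2n};q)=(-1)^{n}q^{-n^{2}}/(q^{2};q^{2})_{n}$ in agreement with \eqref{eq:stieltjes 5.6}; the extra factor $q^{n}$ appearing in the display \eqref{eq:sw3} is a typo in the lemma statement, not an error on your part.
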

\begin{proof}
From (\ref{eq:qe1}) and (\ref{eq:sw1}) we get,
\[
\begin{aligned} & \left(-zq^{1/2},zt;q\right)_{\infty}=\frac{1}{\sqrt{2\pi\log q^{-1}}}\int_{-\infty}^{\infty}\frac{\left(zt;q\right)_{\infty}\exp\left(\frac{x^{2}}{\log q^{2}}\right)}{\left(ze^{ix};q\right)_{\infty}}dx\\
& =\sum_{k=0}^{\infty}z^{k}\int_{-\infty}^{\infty}\frac{\left(te^{-ix};q\right)_{k}\exp\left(\frac{x^{2}}{\log q^{2}}+ikx\right)}{\sqrt{2\pi\log q^{-1}}\left(q;q\right)_{k}}dx=\sum_{k=0}^{\infty}z^{k}\int_{-\infty}^{\infty}\frac{\left(te^{ix};q\right)_{k}\exp\left(\frac{x^{2}}{\log q^{2}}-ikx\right)}{\sqrt{2\pi\log q^{-1}}\left(q;q\right)_{k}}dx\\
& =\sum_{k=0}^{\infty}z^{k}q^{k^{2}/2}S_{k}\left(tq^{-k-1/2};q\right).
\end{aligned}
\]
Let $t=q^{1/2}$ in (\ref{eq:sw2}) to get (\ref{eq:sw3}) by matching
the corresponding coefficients of $z^{n}$ of 
\[
\left(z^{2}q;q^{2}\right)_{\infty}=\sum_{n=0}^{\infty}\frac{q^{n^{2}}(-1)^{n}z^{2n}}{(q^{2};q^{2})_{n}}=\sum_{k=0}^{\infty}\left(-z\right)^{k}q^{\binom{k}{2}}S_{k}\left(q^{-k};q\right).
\]
Let $t=-1$ and $t=-q$ in (\ref{eq:sw2}) to obtain 
\[
\left(-zq^{1/2},-z;q\right)_{\infty}=\left(-z;q^{1/2}\right)_{\infty}=\sum_{n=0}^{\infty}\frac{q^{n(n-1)/4}z^{n}}{\left(q^{1/2};q^{1/2}\right)_{n}}=\sum_{k=0}^{\infty}z^{k}q^{k^{2}/2}S_{k}\left(-q^{-k-1/2};q\right),
\]
\[
\left(-zq^{1/2},-zq;q\right)_{\infty}=\left(-zq^{1/2};q^{1/2}\right)_{\infty}=\sum_{n=0}^{\infty}\frac{q^{n(n+1)/4}z^{n}}{\left(q^{1/2};q^{1/2}\right)_{n}}=\sum_{k=0}^{\infty}z^{k}q^{k^{2}/2}S_{k}\left(-q^{-k+1/2};q\right)
\]
respectively, then evaluations in (\ref{eq:sw4}) are obtained by
matching corresponding coefficients of $z^{n}$. \end{proof}

\begin{thm}
For $\left|c/(ab)\right|<1$ we have
\begin{equation}
\sum_{n=0}^{\infty}\frac{\left(b;q\right)_{n}q^{n^{2}/2}S_{n}\left(aq^{-n-1/2};q\right)}{\left(c;q\right)_{n}}\left(\frac{c}{ab}\right)^{n}=\frac{\left(c/b;q\right)_{\infty}}{\left(c;q\right)_{\infty}}\sum_{n=0}^{\infty}\frac{q^{n^{2}/2}}{\left(q;q\right)_{n}}\left(\frac{c}{ab}\right)^{n}A_{q}\left(\frac{cq^{n-1/2}}{a}\right).\label{eq:sw5}
\end{equation}
In particular, for \textup{$\left|cz\right|<q^{1/2}$ we have} 
\begin{equation}
\sum_{n=0}^{\infty}\frac{q^{\binom{n}{2}}}{\left(q;q\right)_{n}}\left(cz\right)^{n}A_{q}\left(cq^{n-1}\right)=\frac{\left(c;q\right)_{\infty}}{\left(cz;q\right)_{\infty}}\sum_{n=0}^{\infty}\frac{q^{n^{2}}\prod_{j=0}^{2n-1}\left(z-q^{j}\right)\left(-c^{2}\right)^{n}}{\left(q^{2},c,cq;q^{2}\right)_{n}}\label{eq:sw6}
\end{equation}
and
\end{thm}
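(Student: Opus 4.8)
The plan is to establish \eqref{eq:sw5} by showing that each side reduces to the single series
$\dfrac{(c/b;q)_{\infty}}{(c;q)_{\infty}}\sum_{l\ge0}\dfrac{(b;q)_{l}}{(q;q)_{l}}\,q^{l^{2}/2}\bigl(c/(ab)\bigr)^{l}$,
and then to read off \eqref{eq:sw6} as the specialization $a=q^{1/2}$, $b=1/z$ together with the explicit evaluations of $S_{n}(q^{-n})$.

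For the left-hand side of \eqref{eq:sw5} I would substitute the defining finite sum \eqref{eqdefSWpol}, namely $S_{n}(aq^{-n-1/2})=\sum_{k=0}^{n}q^{k^{2}}(-a)^{k}q^{-kn-k/2}/[(q;q)_{k}(q;q)_{n-k}]$, and reindex by $n=k+l$. A direct check shows the $q$-exponent telescopes to $\binom{k}{2}+l^{2}/2$ and the $a$-power to $a^{-l}$; after splitting $(b;q)_{k+l}=(b;q)_{l}(bq^{l};q)_{k}$ and $(c;q)_{k+l}=(c;q)_{l}(cq^{l};q)_{k}$ by \eqref{eq:1.2}, the inner $k$-series is exactly ${}_{1}\phi_{1}(bq^{l};cq^{l};q,c/b)$. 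Because its argument $c/b$ coincides with $(cq^{l})/(bq^{l})$, the confluent $q$-Gauss sum \eqref{eqqCGauss} collapses it to $(c/b;q)_{\infty}/(cq^{l};q)_{\infty}$; writing $(cq^{l};q)_{\infty}=(c;q)_{\infty}/(c;q)_{l}$ cancels the factor $(c;q)_{l}$ and leaves the common series.

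For the right-hand side I would expand $A_{q}(cq^{n-1/2}/a)=\sum_{m}q^{m^{2}}(-cq^{n-1/2}/a)^{m}/(q;q)_{m}$ and reindex $l=n+m$; since $c/a=b\cdot c/(ab)$, the two geometric factors merge into $(c/(ab))^{l}$, the $q$-exponent again becomes $l^{2}/2+\binom{m}{2}$, and the inner finite sum evaluates by the $q$-binomial theorem $\sum_{m=0}^{l}\gauss{l}{m}(-b)^{m}q^{\binom{m}{2}}=(b;q)_{l}$. Thus the right-hand side, already carrying the prefactor $(c/b;q)_{\infty}/(c;q)_{\infty}$, reduces to the same common series, proving \eqref{eq:sw5}. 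To obtain \eqref{eq:sw6} I would set $a=q^{1/2}$, $b=1/z$: then $aq^{-n-1/2}=q^{-n}$, $cq^{n-1/2}/a=cq^{n-1}$, $c/b=cz$, and $q^{n^{2}/2}(c/(ab))^{n}=q^{\binom{n}{2}}(cz)^{n}$, so after isolating the $A_{q}$-series identity \eqref{eq:sw5} becomes
\[
\sum_{n\ge0}\frac{q^{\binom{n}{2}}}{(q;q)_{n}}(cz)^{n}A_{q}(cq^{n-1})=\frac{(c;q)_{\infty}}{(cz;q)_{\infty}}\sum_{n\ge0}\frac{(1/z;q)_{n}S_{n}(q^{-n})}{(c;q)_{n}}q^{\binom{n}{2}}(cz)^{n}.
\]
Using \eqref{eq:sw3}, the odd terms drop out and $S_{2n}(q^{-2n})=(-1)^{n}q^{n-n^{2}}/(q^{2};q^{2})_{n}$; together with $(1/z;q)_{2n}z^{2n}=\prod_{j=0}^{2n-1}(z-q^{j})$ and $(c;q)_{2n}=(c;q^{2})_{n}(cq;q^{2})_{n}$, the surviving sum becomes the series on the right of \eqref{eq:sw6}.

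The main obstacle is the bookkeeping of the two double-sum reindexings: one must verify that the $q$-powers telescope to $\binom{k}{2}+l^{2}/2$, so that the inner series is genuinely a ${}_{1}\phi_{1}$ with argument exactly $c/b$ (the precise value the confluent $q$-Gauss sum \eqref{eqqCGauss} requires), and that on the right the geometric factors recombine into a single $(c/(ab))^{l}$. The hypothesis $|c/(ab)|<1$, equivalently $|cz|<q^{1/2}$, is what secures absolute convergence and legitimizes both interchanges of summation. One delicate point is to use the evaluation $S_{2n}(q^{-2n})=(-1)^{n}q^{n-n^{2}}/(q^{2};q^{2})_{n}$ from \eqref{eq:sw3}: the factor $q^{n}$ it carries is exactly what raises the final $q$-exponent from $n^{2}-n$ to the $n^{2}$ that appears in \eqref{eq:sw6}.
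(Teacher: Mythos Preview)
Your proof of \eqref{eq:sw5} is correct but proceeds by a genuinely different route from the paper's. The paper establishes \eqref{eq:sw5} via the Gaussian integral machinery of Part~I: it inserts the integral representation \eqref{eq:sw1} for $q^{n^{2}/2}S_{n}(aq^{-n-1/2};q)$, sums under the integral using the full $q$-Gauss sum, then the $q$-binomial theorem, and finally recognizes the resulting integral as $A_{q}$ via \eqref{eq:2}. You instead work purely with series: expanding $S_{n}$ and $A_{q}$ by their defining sums, reindexing each side as a double sum, and collapsing the inner sums (on the left by the confluent $q$-Gauss sum \eqref{eqqCGauss}, on the right by the finite $q$-binomial theorem) so that both sides reduce to the common single series $\dfrac{(c/b;q)_{\infty}}{(c;q)_{\infty}}\sum_{l\ge0}\dfrac{(b;q)_{l}}{(q;q)_{l}}\,q^{l^{2}/2}(c/(ab))^{l}$. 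Your argument is more elementary and self-contained, needing none of the integral representations; the paper's approach, on the other hand, is part of its systematic program of exploiting \eqref{eq:1}--\eqref{eq:sw1}. For the passage to \eqref{eq:sw6} the two proofs coincide: both set $a=q^{1/2}$, $b=1/z$, drop the odd terms via $S_{2n+1}(q^{-2n-1})=0$, and insert the evaluation of $S_{2n}(q^{-2n})$ from \eqref{eq:sw3}.
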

\begin{equation}
A_{q}(c)=\sum_{n=0}^{\infty}\frac{q^{3n^{2}+n}\left(-c^{2}\right)^{n}}{\left(q^{2},cq,cq^{2};q^{2}\right)_{n}}.\label{eq:sw7}
\end{equation}
Similarly, for $\left|c^{2}z\right|<q^{-1}$ we have 
\begin{equation}
\sum_{n=0}^{\infty}\frac{q^{n^{2}+n}\left(-c^{2}z\right)^{n}}{\left(q^{2};q^{2}\right)_{n}}A_{q^{2}}\left(-c^{2}q^{2n}\right)=\frac{\left(c^{2}q;q^{2}\right)_{\infty}}{\left(c^{2}zq;q^{2}\right)_{\infty}}\sum_{n=0}^{\infty}\frac{q^{\binom{n+1}{2}}\prod_{j=0}^{n-1}\left(z-q^{2j}\right)\left(-c^{2}\right)^{n}}{\left(q,cq^{1/2},-cq^{1/2};q\right)_{n}}\label{eq:sw8}
\end{equation}
and
\begin{equation}
A_{q^{2}}\left(-c^{2}\right)=\left(c^{2}q;q^{2}\right)_{\infty}\sum_{n=0}^{\infty}\frac{q^{(3n^{2}-n)/2}c^{2n}}{\left(q,cq^{1/2},-cq^{1/2};q\right)_{n}}.\label{eq:sw9}
\end{equation}

\begin{proof}
Let $\alpha=-n$ in (\ref{eq:sw2}) to get 
\[
q^{n^{2}/2}S_{n}\left(xq^{-n-1/2};q\right)=\frac{1}{\sqrt{\pi\log q^{-2}}}\int_{-\infty}^{\infty}\frac{\left(xe^{iy};q\right)_{n}}{\left(q;q\right)_{n}}\exp\left(\frac{y^{2}}{\log q^{2}}-iny\right)dy.
\]
Under the condition $\left|c/(ab)\right|<1$ we first apply the q-Gauss
sum (II.8) of \cite{Gas:Rah} then apply the q-binomial theorem (II.1) of \cite{Gas:Rah} to obtain
\[
\begin{aligned} & \sum_{n=0}^{\infty}\frac{\left(b;q\right)_{n}q^{n^{2}/2}S_{n}\left(aq^{-n-1/2};q\right)}{\left(c;q\right)_{n}}\left(\frac{c}{ab}\right)^{n}=\int_{-\infty}^{\infty}\sum_{n=0}^{\infty}\frac{\left(b,ae^{iy};q\right)_{n}}{\left(q,c;q\right)_{n}}\left(\frac{ce^{-iy}}{ab}\right)^{n}\frac{\exp\left(\frac{y^{2}}{\log q^{2}}\right)dy}{\sqrt{\pi\log q^{-2}}}\\
& =\int_{-\infty}^{\infty}\frac{\left(ce^{-iy}/a,c/b;q\right)_{\infty}}{\left(c,ce^{-iy}/ab;q\right)_{\infty}}\frac{\exp\left(\frac{y^{2}}{\log q^{2}}\right)dy}{\sqrt{\pi\log q^{-2}}}=\int_{-\infty}^{\infty}\frac{\left(ce^{iy}/a,c/b;q\right)_{\infty}}{\left(c,ce^{iy}/ab;q\right)_{\infty}}\frac{\exp\left(\frac{y^{2}}{\log q^{2}}\right)dy}{\sqrt{\pi\log q^{-2}}}\\
& =\frac{\left(c/b;q\right)_{\infty}}{\left(c;q\right)_{\infty}}\sum_{n=0}^{\infty}\frac{1}{\left(q;q\right)_{n}}\left(\frac{c}{ab}\right)^{n}\int_{-\infty}^{\infty}\left(ce^{iy}/a;q\right)_{\infty}\frac{\exp\left(\frac{y^{2}}{\log q^{2}}+iny\right)dy}{\sqrt{\pi\log q^{-2}}}\\
& =\frac{\left(c/b;q\right)_{\infty}}{\left(c;q\right)_{\infty}}\sum_{n=0}^{\infty}\frac{q^{n^{2}/2}}{\left(q;q\right)_{n}}\left(\frac{c}{ab}\right)^{n}A_{q}\left(\frac{cq^{n-1/2}}{a}\right),
\end{aligned}
\]
which gives (\ref{eq:sw5}). 

Let $a=q^{1/2}$ in (\ref{eq:sw5}), under the condition $\left|c/b\right|<q^{1/2}$
by applying (\ref{eq:sw3}) to get
\[
\begin{aligned} & \frac{\left(c/b;q\right)_{\infty}}{\left(c;q\right)_{\infty}}\sum_{n=0}^{\infty}\frac{q^{\binom{n}{2}}}{\left(q;q\right)_{n}}\left(\frac{c}{b}\right)^{n}A_{q}\left(cq^{n-1}\right)=\sum_{n=0}^{\infty}\frac{\left(b;q\right)_{n}q^{\binom{n}{2}}S_{n}\left(q^{-n};q\right)}{\left(c;q\right)_{n}}\left(\frac{c}{b}\right)^{n}\\
& =\sum_{n=0}^{\infty}\frac{\left(b;q\right)_{2n}q^{\binom{2n}{2}}S_{2n}\left(q^{-2n};q\right)}{\left(c;q\right)_{2n}}\left(\frac{c}{b}\right)^{2n}=\sum_{n=0}^{\infty}\frac{(-1)^{n}\left(b;q\right)_{2n}q^{n^{2}}}{\left(c;q\right)_{2n}\left(q^{2};q^{2}\right)_{n}}\left(\frac{c}{b}\right)^{2n}\\
& =\sum_{n=0}^{\infty}\frac{q^{n^{2}}\prod_{j=0}^{2n-1}\left(1/b-q^{j}\right)\left(-c^{2}\right)^{n}}{\left(c;q\right)_{2n}\left(q^{2};q^{2}\right)_{n}}=\sum_{n=0}^{\infty}\frac{q^{n^{2}}\prod_{j=0}^{2n-1}\left(1/b-q^{j}\right)\left(-c^{2}\right)^{n}}{\left(q^{2},c,cq;q^{2}\right)_{n}},
\end{aligned}
\]
which gives (\ref{eq:sw6}). Let $b\to\infty$ in the above equation
or $z=0$ in (\ref{eq:sw6}) to obtain (\ref{eq:sw7}). Let $a=-1$
in (\ref{eq:sw5}), by (\ref{eq:sw4}) to obtain
\[
\begin{aligned} & \frac{\left(c/b;q\right)_{\infty}}{\left(c;q\right)_{\infty}}\sum_{n=0}^{\infty}\frac{q^{n^{2}/2}}{\left(q;q\right)_{n}}\left(-\frac{c}{b}\right)^{n}A_{q}\left(-cq^{n-1/2}\right)=\sum_{n=0}^{\infty}\frac{\left(b;q\right)_{n}q^{n^{2}/2}S_{n}\left(-q^{-n-1/2};q\right)}{\left(c;q\right)_{n}}\left(-\frac{c}{b}\right)^{n}\\
& =\sum_{n=0}^{\infty}\frac{\left(b;q\right)_{n}q^{\left(n^{2}-n\right)/4}}{\left(c;q\right)_{n}\left(q^{1/2};q^{1/2}\right)_{n}}\left(-\frac{c}{b}\right)^{n}=\sum_{n=0}^{\infty}\frac{q^{\binom{n}{2}/2}\prod_{j=0}^{n-1}\left(1/b-q^{j}\right)\left(-c\right)^{n}}{\left(q^{1/2},c^{1/2},-c^{1/2};q^{1/2}\right)_{n}},
\end{aligned}
\]
which gives (\ref{eq:sw8}), taking $b\to\infty$ in the above equation
to get (\ref{eq:sw9}). \end{proof}
\begin{cor}
For $m=0,1,\dots,$ we have
\begin{equation}
\sum_{n=0}^{\infty}\frac{q^{3n^{2}+(2m+1)n}\left(-1\right)^{n}}{\left(q^{2},-q^{m+1},-cq^{m+2};q^{2}\right)_{n}}=\frac{(-1)^{m}q^{-\binom{m}{2}}a_{m}(q)}{(q,q^{4};q^{5})_{\infty}}-\frac{(-1)^{m}q^{-\binom{m}{2}}b_{m}(q)}{(q^{2},q^{3};q^{5})_{\infty}},\label{eq:sw10}
\end{equation}
and
\begin{equation}
\sum_{n=0}^{\infty}\frac{q^{(3n^{2}-n)/2+2mn}}{\left(q,q^{m+1/2},-q^{m+1/2};q\right)_{n}}=\frac{(-1)^{m}q^{-m(m-1)}}{\left(q^{2m+1};q^{2}\right)_{\infty}}\left\{ \frac{a_{m}(q^{2})}{(q^{2},q^{8};q^{10})_{\infty}}-\frac{b_{m}(q^{2})}{(q^{4},q^{6};q^{10})_{\infty}}\right\} .\label{eq:sw11}
\end{equation}
\end{cor}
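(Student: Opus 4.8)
The plan is to derive the two identities \eqref{eq:sw10} and \eqref{eq:sw11} as specializations of the two ``$A_q$--expansion'' formulas \eqref{eq:sw7} and \eqref{eq:sw9}, combined with the basic $m$-version formula \eqref{eq2.1}. The key observation is that both left-hand sides are manifestly of the shape appearing in \eqref{eq:sw7} or \eqref{eq:sw9} once the free parameter $c$ is set to an appropriate power of $q$, while the right-hand sides are exactly what \eqref{eq2.1} produces for $A_q(-q^m)$ or $A_{q^2}(-q^{2m})$.

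For \eqref{eq:sw10}, I would start from \eqref{eq:sw7}, namely
\[
A_{q}(c)=\sum_{n=0}^{\infty}\frac{q^{3n^{2}+n}\left(-c^{2}\right)^{n}}{\left(q^{2},cq,cq^{2};q^{2}\right)_{n}},
\]
and substitute $c=-q^{m}$. The exponent bookkeeping is the main thing to check: the factor $(-c^2)^n = (-q^{2m})^n = (-1)^n q^{2mn}$ combines with $q^{3n^2+n}$ to give $q^{3n^2+(2m+1)n}(-1)^n$, matching the summand of \eqref{eq:sw10}, while the denominator $(q^2, cq, cq^2; q^2)_n$ becomes $(q^2, -q^{m+1}, -q^{m+2}; q^2)_n$; here I would note the slight typo in \eqref{eq:sw10} (the parameter $-cq^{m+2}$ should read $-q^{m+2}$). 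The left-hand value is then $A_q(-q^m)$, and I invoke \eqref{eq2.1} to rewrite it as $(-1)^m q^{-\binom{m}{2}}[a_m(q)/(q,q^4;q^5)_\infty - b_m(q)/(q^2,q^3;q^5)_\infty]$, which is precisely the right-hand side of \eqref{eq:sw10}.

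For \eqref{eq:sw11}, the parallel route uses \eqref{eq:sw9},
\[
A_{q^{2}}\left(-c^{2}\right)=\left(c^{2}q;q^{2}\right)_{\infty}\sum_{n=0}^{\infty}\frac{q^{(3n^{2}-n)/2}c^{2n}}{\left(q,cq^{1/2},-cq^{1/2};q\right)_{n}}.
\]
Setting $c=q^{m}$ turns the summand into $q^{(3n^2-n)/2+2mn}/(q, q^{m+1/2}, -q^{m+1/2}; q)_n$ and the prefactor into $(q^{2m+1};q^2)_\infty$. Dividing across by $(q^{2m+1};q^2)_\infty$, the left side becomes $A_{q^2}(-q^{2m})/(q^{2m+1};q^2)_\infty$, and I would evaluate $A_{q^2}(-q^{2m})$ by applying \eqref{eq2.1} with base $q^2$ in place of $q$: this yields the factor $(-1)^m q^{-2\binom{m}{2}} = (-1)^m q^{-m(m-1)}$ together with the curly-brace combination $a_m(q^2)/(q^2,q^8;q^{10})_\infty - b_m(q^2)/(q^4,q^6;q^{10})_\infty$, exactly as displayed. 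The convergence hypotheses $|cz|<q^{1/2}$ and $|c^2z|<q^{-1}$ from the theorem are satisfied at $z=0$, which is the implicit specialization giving \eqref{eq:sw7} and \eqref{eq:sw9}, so no additional analytic care is needed.

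I expect no serious obstacle here: the entire proof is two clean substitutions followed by an appeal to \eqref{eq2.1}. The only place demanding attention is the exponent and base arithmetic in the $q\to q^2$ case of \eqref{eq2.1}, where one must correctly track that $\binom{m}{2}$ in base $q^2$ contributes $q^{-2\binom{m}{2}}=q^{-m(m-1)}$ and that the quintuple-product moduli $q,q^4$ and $q^2,q^3$ become $q^2,q^8$ and $q^4,q^6$ with the outer base $q^{10}$. I would present this as a short corollary-style proof, stating simply that \eqref{eq:sw10} is the case $c=-q^m$ of \eqref{eq:sw7} after using \eqref{eq2.1}, and \eqref{eq:sw11} is the case $c=q^m$ of \eqref{eq:sw9} after using \eqref{eq2.1} with $q$ replaced by $q^2$.
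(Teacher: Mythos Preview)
Your proposal is correct and follows exactly the paper's approach: the authors' proof simply states that \eqref{eq:sw10} is obtained by letting $c=-q^{m}$ in \eqref{eq:sw7}, and \eqref{eq:sw11} by setting $c=q^{m}$ in \eqref{eq:sw9}. Your additional remarks (the exponent bookkeeping, the observation that $-cq^{m+2}$ should read $-q^{m+2}$, and the explicit invocation of \eqref{eq2.1} with base $q^2$) are all accurate elaborations of what the paper leaves implicit.
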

\begin{proof}
Equation (\ref{eq:sw10}) is obtained by letting $c=-q^{m},\ m=0,1,\dots$
in (\ref{eq:sw7}), while Equation (\ref{eq:sw11})is followed by
setting $c=q^{m},\ m=0,1,\dots$ in (\ref{eq:sw9}). \end{proof}

 \end{document}